\documentclass[12pt]{article}
\usepackage{amsmath, amssymb, amsthm}   
\usepackage{graphicx}   
\usepackage{verbatim}   
\usepackage{color}      

\usepackage{ulem, utopia}
\usepackage[top=2.5cm, bottom=2.5cm, left=2.5cm, right=2cm]{geometry}
\theoremstyle{definition}
\newtheorem{definition}{Definition}

\theoremstyle{theorem}

\newtheorem{lemma}[definition]{Lemma}
\newtheorem{theorem}[definition]{Theorem}
\newtheorem{corollary}[definition]{Corollary}

\newcommand{\pr}{\mathbb{P}}
\newcommand{\bE}{\mathbb{E}}

\newcommand{\1}{{\bf 1}}

\begin{document}
	
	\title{Limit theorem for perturbed  random walks} 
	
	\author{Hoang-Long Ngo\footnote{Hanoi National University of Education. 136 Xuan Thuy, Cau Giay, Hanoi, Vietnam.   	
	Email: ngolong@hnue.edu.vn}         \qquad 
		Marc Peign\'e \footnote{Institut Denis Poisson, University of Tours. Parc de Grandmont
			37200 Tours, France.    
			Email: peigne@univ-tours.fr}}

\maketitle 	
	
	\textbf{Abstract } 
	 We consider random walks perturbed at zero which behave like (possibly different) random walks with i.i.d. increments on each half lines and restarts at $0$ whenever they cross that point.  We show that the perturbed random walk, after being rescaled in a proper way, converges to a skew Brownian motion whose parameter is defined by renewal functions of the simple random walks and the transition probabilities from $0$.   

		\textbf{Keywords } Invariance principle,  Reflected Brownian motion, Renewal function, Skew Brownian motion 
		 
		\textbf{Mathematics Subject Classification (2010)} 60F17  60M50 

  \section{Introduction}

 Let $(S(n))_{n \geq 0}$ be a  random walks on $\mathbb Z$ starting from $0$ whose increments $\xi_k, k \geq 1$, are i.i.d  with finite second moment.  A continuous time process $(S(t))_{t\geq 0}$  can be constructed  from the sequence $(S(n))_{n\geq 0}$ by using the linear interpolation between the values at integer points. According to the well-known Donsker's theorem,  the sequence of stochastic processes $(S_n(t))_{n \geq 1}$, defined by
\[
S_n(t):= {1\over \sqrt{n}}S(nt), \quad n \geq 0,
\]
 weakly converges  in the space continuous functions  on $[0, 1]$ to the Wiener process,  as $n \to +\infty$.
   
 In this paper, we  consider the more general cases  of spatially inhomogeneous random walks  $(X(n))_{n \geq 0}$  on $\mathbb Z$   which model some discrete  time diffusion in a one dimensional space with two different media $\mathbb Z^-$ and $\mathbb Z^+$ and a  barrier $\{0\}$. Let us state the main result of  the present article. 
\begin{theorem}\label{skew}
Assume that 
\begin{enumerate} 
 \item $(\xi_n)_{n \geq 1}$  and  $(\xi'_n)_{n \geq 1}$ are sequences of $\mathbb Z$-valued, centered and   i.i.d. random variables with finite second moments     and   respective variances $\sigma^2$ and $\sigma'^2$;
\item   the supports of the distribution of the $\xi_n$ and $\xi'_n$ are not included in the coset of a proper subgroup of $\mathbb Z$ (aperiodicity condition); 
\item $(\eta_n)_{n \geq 1}$  is a sequence of  $\mathbb Z$-valued and i.i.d. random variables  such that  $\bE[\vert \eta_n\vert ] <+\infty$, and $\pr[\eta_n =0] < 1$;  
\item the sequences  $(\xi_n)_{n \geq 1}$, $(\xi'_n)_{n \geq 1}$ and $(\eta_n)_{n \geq 1}$ are independent.
\end{enumerate}
Let $(X(n))_{n \geq 0}$  be the $\mathbb Z$-valued process defined by:
$X(0)=0$ and, for $n \geq 1$,  
\begin{equation}\label{X(n)}
X(n)= \begin{cases}
X(n-1) + \xi_n & \ \ \text{ if }  \quad  X{(n-1)} > 0 \quad \text{and} \quad X{(n-1)} + \xi_n> 0, \\
0 & \ \ \text{ if }  \quad  X{(n-1)} > 0 \quad \text{and} \quad X{(n-1)} + \xi_n\leq 0, \\
\eta_{n} & \ \ \text{ if }  \quad  X{(n-1)} = 0, \\
X(n-1) + \xi'_n & \ \ \text{ if }  \quad  X{(n-1)} < 0 \quad \text{and} \quad X{(n-1)} + \xi'_n< 0, \\
0& \ \ \text{ if }  \quad  X{(n-1)} < 0 \quad \text{and} \quad X{(n-1)} + \xi'_n\geq 0. 
\end{cases}
\end{equation}
Let  $(X(t))_{t\geq0}$ be the continuous time process   constructed  from the sequence $(X(n))_{n \geq 0}$ by  linear interpolation between the values at integer points, and, for any $n \geq 1, t \geq 0$, set 
\[
X_n(t):=  \begin{cases} \displaystyle {1\over \sigma\sqrt{n}}X(nt) & \text{\it when} \quad X(nt)\geq 0,\\
\displaystyle {1\over \sigma'\sqrt{n}}X(nt) & \text{\it when} \quad X(nt)\leq 0. 
\end{cases}
\]
Then,  as $n \to +\infty$,  the sequence of stochastic processes $(X_n(t))_{n \geq 1}$ 
 weakly converges  in the space of continuous functions  on $[0, +\infty)$ to the skew Brownian motion $( B^\alpha(t))_{t \geq 0}$   on $\mathbb R$ with     parameter $\alpha$  depending on the distribution of the $\xi_n, \xi'_n$ and $\eta_n$   as follows:
  \begin{align} \label{def:alpha} 
\alpha =   {c_1 \bE[h(\eta_1)\1_{\{\eta_1>0\}}]\over  c_1 \bE[h(\eta_1)\1_{\{\eta_1>0\}}] + c'_1 \bE[h'(-\eta_1)\1_{\{\eta_1<0\}}] }, 
\end{align}  
 where
 \begin{enumerate}
 \item $h$   is  the ``descending renewal function''  \footnote{We refer to sections \ref{sec:2.1}  and \ref{proofoftheorem} for the definition  of the``renewal function'' of an oscillating random walk.} of the random walk    $
  (S(n))_{n \geq 0}$ with increments $\xi_k$; 
  \item  $ \displaystyle  c_1={1\over \sqrt{\pi}}\exp\left(\sum_{k=1}^{+\infty}{1\over k}\left(\mathbb P[S(k) \textcolor{black}{\geq} 0]-{1\over 2}\right)\right)$;
  \item   $h'$  is  the   ``ascending renewal function'' of the random walk  $
  (S'(n))_{n \geq 0}$ with increments $\xi'_k$;
  \item $\displaystyle c'_1={1\over \sqrt{\pi}}\exp\left(\sum_{k=1}^{+\infty}{1\over k}\left(\mathbb P[S'(k) \textcolor{black}{\leq} 0]-{1\over 2}\right)\right)$.
 \end{enumerate} 
 \end{theorem}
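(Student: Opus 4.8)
The plan is to exploit the regenerative structure of $(X(n))_{n\ge 0}$ at its successive returns to $0$, which reduces the theorem to three ingredients: a sharp fluctuation estimate for the one-sided exit times of $(S(n))$ and $(S'(n))$ that produces the constants appearing in $\alpha$; tightness of $(X_n)$ in $C([0,+\infty))$; and the identification of every subsequential limit as a skew Brownian motion whose parameter is then fixed by a single one-dimensional functional. Concretely, set $T_0=0$ and $T_{k+1}=\inf\{n>T_k:X(n)=0\}$. Conditionally on the past, the path fragment $(X(T_k+j))_{0\le j\le T_{k+1}-T_k}$ is an ``excursion'': it jumps to $\eta:=\eta_{T_k+1}$ and, if $\eta>0$ (resp.\ $\eta<0$), evolves like the walk $(S(n))$ (resp.\ $(S'(n))$) started at $\eta$ until it first enters $(-\infty,0]$ (resp.\ $[0,+\infty)$); if $\eta=0$ it is the trivial one-step excursion. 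Since $S$ and $S'$ oscillate, all excursions are a.s.\ finite, $X$ is recurrent, and the successive triples (excursion, length $L_k$, sign $\varepsilon_k\in\{+,-,0\}$) are i.i.d.\ — a discrete analogue of It\^o's excursion decomposition. The whole argument is about how this discrete excursion process rescales.

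\emph{Fluctuation estimate and the constants.} The key analytic input, to be obtained from Spitzer's identity, a Tauberian theorem and the properties of the renewal functions recalled in Section~\ref{sec:2.1}, is that for each fixed $m\ge 1$ the $S$-walk started at $m$ stays positive for $k$ steps with probability $\sim c_1 h(m)/\sqrt{k}$ as $k\to\infty$, and symmetrically for $S'$ with $c'_1 h'(m)/\sqrt{k}$; here the definition of $c_1$ via $\mathbb P[S(k)\ge 0]$ is precisely what comes out of the Tauberian passage. Because $\bE|\eta_1|<\infty$ and $h(m),h'(m)=O(m)$ (a consequence of the finite second moments, which make the ladder heights integrable), one may integrate against the law of $\eta_1$ to get
\[
\pr\big(L_1>k,\ \varepsilon_1=+\big)\ \sim\ \frac{A^+}{\sqrt{k}},\qquad \pr\big(L_1>k,\ \varepsilon_1=-\big)\ \sim\ \frac{A^-}{\sqrt{k}},
\]
with $A^+=c_1\,\bE[h(\eta_1)\1_{\{\eta_1>0\}}]$ and $A^-=c'_1\,\bE[h'(-\eta_1)\1_{\{\eta_1<0\}}]$. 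In particular $\pr(L_1>k)\sim (A^++A^-)/\sqrt{k}$, so the excursion lengths lie in the domain of attraction of the one-sided $\tfrac12$-stable law and have infinite mean; and $\alpha=A^+/(A^++A^-)$ is exactly the quantity in \eqref{def:alpha}.

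\emph{Modulus convergence, tightness, identification.} After the $\sigma,\sigma'$ normalization every excursion of $(X(n))$ becomes, on the diffusive scale, a Brownian-type excursion of a variance-one process, so $(|X_n|)_n$ converges weakly in $C([0,+\infty))$ to reflected standard Brownian motion $|W|$; this one-sided perturbed-walk invariance principle is proved via the regenerative CLT (convergence of the rescaled count of zeros to the Brownian local time together with convergence of the straddling excursion). A Doob--Kolmogorov maximal inequality applied to $S,S'$ over each excursion bounds the oscillation of $X$ on a time interval by that of the finitely many excursions it meets, which gives tightness of $(X_n)$ itself; hence any subsequential limit $Y$ has $|Y|\stackrel{d}{=}|W|$, spends Lebesgue-a.e.\ time off $0$, and by the regeneration at zeros is obtained from $|W|$ by attaching i.i.d.\ $\{+,-\}$-signs to its excursions — that is, $Y$ is a skew Brownian motion. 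To compute its parameter $\beta$ note that $\pr(B^\beta(1)>0)=\beta$, so it suffices to show $\lim_{n}\pr(X(n)>0)=\alpha$; but $X(n)>0$ iff the excursion straddling time $n$ is positive, and writing $\pr(X(n)>0)=\sum_{j=0}^{n}U(\{j\})\,\pr(L_1>n-j,\varepsilon_1=+)$ with $U$ the renewal measure of the excursion process, the ratio $\pr(L_1>m,\varepsilon_1=+)/\pr(L_1>m)\to A^+/(A^++A^-)$ together with $\pr(X(n)=0)\to 0$ gives $\pr(X(n)>0)\to A^+/(A^++A^-)=\alpha$ (a Dynkin--Lamperti/Erickson-type size-biasing, the trivial $\eta=0$ excursions contributing a negligible $O(n^{-1/2})$ fraction of time). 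Thus $\beta=\alpha$, and tightness plus uniqueness of the subsequential limit yield $X_n\Rightarrow B^\alpha$ in $C([0,+\infty))$.

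\emph{Main obstacle.} The delicate point is to upgrade the last step from the one-dimensional marginal to the process level: one must show that the rescaled point process of (starting local time, rescaled excursion, sign) converges to the Poisson point process governing $B^\alpha$, i.e.\ that the macroscopic excursions of $X$ acquire asymptotically independent signs with success probability tending to $\alpha$. This rests entirely on the precise asymptotics of the previous paragraph with the \emph{exact} constants $c_1 h(m)$ and $c'_1 h'(m)$ — getting the renewal-function normalizations right and carrying out the Spitzer--Tauberian analysis cleanly is the technical core; granted that, tightness and the excursion characterization of skew Brownian motion finish the proof.
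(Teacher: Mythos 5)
Your regenerative set-up is the same one the paper uses (returns to $0$, excursions governed by $S$ or $S'$ according to the sign of the $\eta$-jump), and your tail asymptotics $\pr(L_1>k,\varepsilon_1=\pm)\sim A^{\pm}/\sqrt{k}$ with $A^+=c_1\bE[h(\eta_1)\1_{\{\eta_1>0\}}]$, $A^-=c'_1\bE[h'(-\eta_1)\1_{\{\eta_1<0\}}]$ are exactly Lemmas \ref{LemA}, \ref{LemB} and \ref{LemB2}; the Dynkin--Lamperti computation of $\pr(X(n)>0)\to\alpha$ is in substance the paper's one-dimensional marginal (Lemma \ref{1-dim2}), resting on the Green-function asymptotics of Lemma \ref{LemB'} (Doney's local renewal theorem in the infinite-mean case). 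The problem is the identification step, which you yourself flag as the ``main obstacle'' and then assume: from tightness and $|Y|\stackrel{d}{=}|W|$ you assert that any subsequential limit $Y$ ``by the regeneration at zeros is obtained from $|W|$ by attaching i.i.d.\ signs to its excursions, that is, $Y$ is a skew Brownian motion''. This does not follow. Weak limits of Markov chains need not be Markov, and the i.i.d.\ signed-excursion structure of the prelimit does not automatically pass to the limit: at the diffusive scale the zero set becomes dense, the number of excursions meeting a fixed window diverges, and one must prove that the macroscopic excursions carry asymptotically independent signs with probability tending to $\alpha$, independently of $|Y|$ --- i.e.\ convergence of the signed excursion point process, or equivalently of all finite-dimensional distributions, not just the one-dimensional one. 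Knowing $\pr(B^\beta(1)>0)=\beta$ pins the parameter only \emph{after} the limit has been identified as a skew Brownian motion, so your argument is circular at precisely the point where the work lies.

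The paper closes this gap by brute-force computation of the marginals: the two-dimensional distribution is split according to whether the walk returns to $0$ between times $[ns]$ and $[nt]$ (terms $A_1(n)$, $A_2(n)$), and the limits are evaluated using the local estimates of Lemmas \ref{LemA'} and \ref{LemA''}, the Caravenna--Chaumont invariance principle for random-walk bridges conditioned to stay positive (giving the Brownian excursion for the piece straddling $[ns]$ and ending at the return time), and the Bolthausen--Iglehart meander limits for the non-returning piece, together with dominated convergence justified by the uniform bounds \eqref{LemA2} and \eqref{LemA'2}; the resulting integrals are then matched against the explicit skew-BM density \eqref{skewdensity}. None of this (nor any substitute, e.g.\ a martingale-problem characterization as in Iksanov--Pilipenko) appears in your proposal, so the proof is incomplete at its technical core. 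A smaller omission: in the tightness argument the maximal inequalities for $S,S'$ do not control the jumps $\eta_{k+1}$ made from $0$; since only $\bE|\eta_1|<\infty$ is assumed, you need the bound $\bE[N_n]=O(\sqrt n)$ on the number of returns (again Lemma \ref{LemB'}) to show $n^{-1/2}\sup_{l\le N_n}|\eta_l|\to 0$ in probability, as the paper does.
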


  The Markov chain  $(X(n))_{n \geq 0}$ has been the object of several studies in various contexts. 
  
  It first appeared in  \cite{HarrisonShepp}  where the skew Brownian motion (SBM) on $\mathbb R$ is obtained as the weak limit of a normalized  simple random walk   on $\mathbb Z$ which has special behavior only  at the origin; in this seminal work, the authors consider the case when the $\xi_n$ and $\xi'_n$  are Bernoulli symmetric random variables and  the variables $\eta_k$ are also $\{-1, 1\}$ valued, but  with respective probabilities $1-\alpha$ and $\alpha$.  The key step of the proof  in \cite{HarrisonShepp} relies on the reflection principle  
  (but   the convergence of the  finite dimensional distributions is not done in details), which is  valid only for Bernoulli symmetric random walks. Let us cite also the long  and rich article  \cite{Lejay2006} by A. Lejay on the construction of the skew Brownian motion, where    a slightly different argument  is also presented:  the  approximation of  the SBM$(\alpha)$ by Bernoulli symmetric random walks  is obtained starting from a trajectory of the  SBM($\alpha$)   and constructing recursively a suitable sequence of stopping times. Both proofs only work for Bernoulli symmetric random walks. Note also that Harrison and Shepp mentioned without a proof in \cite{HarrisonShepp} that such a result holds for  arbitrary integrable random variables $\eta_n$; Theorem \ref{skew} covers (and extends) this case.   
    
  More recently, in \cite{PilipenkoPryhod'ko2012}, the SBM appears as the weak  limit  of  a random walk in $\mathbb Z$ whose transition probabilities coincide with those of a symmetric random walk with unit steps throughout except for a fixed neighborhood $\{-m,\ldots, m\}$ of zero, called a ``membrane''. Assuming that, from the membrane, the chain jumps to an arbitrary  point of the set $\{-m-1,\ldots, m+1\}$, the authors  describes the different possible weak limit of the suitably scale process according to the fact that the sites $-m-1$ and $m+1$ are essential or not. All possible limits for the corresponding random walks are described. In \cite{IksanovPilipenko2016}, it is proved that this convergence holds in fact  for random walks such that   the absolute value  $\vert \xi_k\vert $  of the steps outside the membrane  are  bounded by $2m+1$; this last condition ensures that the random walk cannot jump over the membrane without passing through it.  A. Iksanov and A. Pilipenko offer in \cite{IksanovPilipenko2016}   a new approach which is based on the martingale characterization of the SBM.  
  In the present paper, the membrane is reduced to $\{0\}$ and the  assumption of ``bounded jumps out of the membrane''  is replaced  by  the absorption condition at $0$; nevertheless, Theorem \ref{skew} may be   extended  to a finite  membrane,  with similar conditions as in \cite{PilipenkoPryhod'ko2012}.
  
  Let us  mention  that the expression of the parameter $\alpha$  is   different of the one proposed in  \cite{PilipenkoPryhod'ko2012}; it takes into account  in an explicit form the inhomogeneity of the random walk on $\mathbb Z$. In the case when the two random walks  are Bernoulli symmetric
 \footnote{The Bernoulli symmetric random walk is not aperiodic and  thus does not fall exactly within the scope of Theorem \ref{skew}; nevertheless, statements of Lemmas \ref{LemA} and \ref{LemA'} still hold in this case and the rest of our proof works. The general non aperiodic case remains an open question; in particular, as far as we know,   Lemmas \ref{LemA} and \ref{LemA'}  are still unknown in the non aperiodic case, except for particular distributions.}, it holds $h(-x)=h'(x)=x$  for any $x \geq 0$ and  $c_1=c'_1$; thus, $\alpha= {\mathbb E[(\eta_1)^+]\over  \mathbb E[\vert \eta_1\vert]}$ as announced in \cite{HarrisonShepp}.

When $\mathcal L(\xi_1)=\mathcal L(-\xi'_1)$ and $ \mathcal \eta_1$ is symmetric  (ie $\mathcal L(\eta_1)=\mathcal L(-\eta_1)$), the process $(\vert X(n)\vert )_{n \geq 0}$ above coincides with  the process $(Y(n))_{n \geq 0}$ defined by $Y(0)=0$ and, for $n \geq 1$,  
\begin{equation}\label{Y(n)}
Y(n)= \begin{cases} Y({n-1}) + \xi_n & \ \ \text{ if }  \quad  Y({n-1}) > 0 \quad \text{and} \quad Y({n-1}) + \xi_n> 0,
\\
0& \ \ \text{ if }  \quad  Y({n-1}) > 0 \quad \text{and} \quad Y({n-1}) + \xi_n\leq 0,
\\
\gamma_{n} & \ \ \text{ if }  \quad  Y({n-1}) = 0\end{cases}
\end{equation}
where $(\gamma_k)_{k \geq 1}$ is a sequence of i.i.d. $\mathbb N$-valued random variables, independent on the sequence $(\xi_n)_{n \geq 1}$.
 This  process $(Y(n))_{n \geq 0}$   is  a variation of the  so-called {\it Lindley process} $(L(n))_{n \geq 0}$, which appears in queuing theory and  corresponds to the case when the random variables $\xi_n$ and $\gamma_n$ are equal.  Recall that the Lindley process is a fundamental example of  processes governed by iterated functions systems (\cite{DF}, \cite{PW});  indeed, setting $f_a(x):= \max(x+a, 0)$ for any $a, x \in \mathbb R$, the quantity $L(n)$ equals (recall $\xi_n=\gamma_n$ for the Lindely process)
\[
L(n)= f_{\xi_n}\circ \cdots \circ f_{\xi_2}\circ f_{\xi_1}(0).
\]
 The composition of the maps $f_a$ is not commutative, this introduces several difficulties.   Namely, between two consecutive visits of $0$, the Lindley process   behaves like  a classical random walk  $(S(n))_{n \geq 1}$; thus,  its study relies on fluctuations of random walks on $\mathbb Z$. In the case of $(Y(n))_{n \geq 0}$,  the fact that after each visit of $0$ the increment is governed by another distribution, the one of the $\gamma_k$, introduces some kind  of inhomogeneity we have to control.

\begin{corollary}\label{meander}
Assume that 
\begin{enumerate} 
 \item $(\xi_n)_{n \geq 1}$ is a sequence of $\mathbb Z$-valued   i.i.d. random variables, and  $\bE[\xi_n] = 0$ and \textcolor{black}{$\bE[\xi_n^2]=\sigma^2 < \infty$};
 \item the support of the distribution of  $\xi_n$ is not included in the coset of a proper subgroup of $\mathbb Z$;
\item $(\gamma_n)_{n \geq 1}$  is a sequence of $\mathbb N$-valued and i.i.d. random variables  such that  $\bE[\gamma_n] <+\infty$, and $\pr[\gamma_n =0] < 1$;  
\item the sequences  $(\xi_n)_{n \geq 1}$ and $(\gamma_n)_{n \geq 1}$ are independent.
\end{enumerate}
Let $(Y(t))_{t\geq0}$ be the continuous time process   constructed  from the sequence $(Y(n))_{n \geq 0}$ by  linear interpolation between the values at integer points. Then,  as $n \to +\infty$,  the sequence of stochastic processes $(Y_n(t))_{n \geq 1}$, defined by

\[
Y_n(t):= {1\over \textcolor{black}{\sigma} \sqrt{n}}Y(nt), \quad n \geq 1, 0\leq t\leq 1, 
\]
 weakly converges  in the space of continuous functions  on $[0, 1]$ to the absolute value $(\vert B(t)\vert)_{t \geq 0}$ of the Brownian motion on $\mathbb R$.
 \end{corollary}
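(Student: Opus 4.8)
The plan is to deduce Corollary~\ref{meander} from Theorem~\ref{skew} by realizing $(Y(n))_{n\ge0}$ as the absolute value of a suitably symmetrized perturbed random walk $(X(n))_{n\ge0}$, whose skewness parameter will turn out to be $1/2$. First I would enlarge the probability space so as to carry, independently of $(\xi_n)_{n\ge1}$ and $(\gamma_n)_{n\ge1}$, a sequence $(\varepsilon_n)_{n\ge1}$ of i.i.d.\ signs with $\pr[\varepsilon_1=1]=\pr[\varepsilon_1=-1]=1/2$ and a sequence $(\xi'_n)_{n\ge1}$ of i.i.d.\ variables with $\mathcal L(\xi'_1)=\mathcal L(-\xi_1)$, and set $\eta_n:=\varepsilon_n\gamma_n$. (This costs nothing since the statement only concerns the law of $Y_n$.) Hypotheses (1)--(4) of Theorem~\ref{skew} are then readily checked: $\xi'_1$ is centered with variance $\sigma'^2=\sigma^2$ and inherits the aperiodicity of $\xi_1$, its support being $-\operatorname{supp}\xi_1$; while $\eta_1$ is symmetric with $\bE[|\eta_1|]=\bE[\gamma_1]<\infty$ and $\pr[\eta_1=0]=\pr[\gamma_1=0]<1$. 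Let $(X(n))_{n\ge0}$ be the process \eqref{X(n)} built from $(\xi_n),(\xi'_n),(\eta_n)$.

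Next I would verify that $(|X(n)|)_{n\ge0}$ has the same law as $(Y(n))_{n\ge0}$; since both chains start at $0$ it suffices to compare one-step transition kernels. From a state $X(n-1)=j>0$ the chain moves to $j+\xi_n$, absorbed at $0$, whereas from $X(n-1)=-j<0$ it moves to $-j+\xi'_n$, absorbed at $0$; using $\mathcal L(\xi'_n)=\mathcal L(-\xi_n)$ one obtains in both cases
\[
\pr\big[\,|X(n)|=m\,\big|\,|X(n-1)|=j\,\big]=\pr[\xi_n=m-j]\ \ (m\ge1),\qquad \pr\big[\,|X(n)|=0\,\big|\,|X(n-1)|=j\,\big]=\pr[\xi_n\le -j].
\]
Hence $(|X(n)|)_{n\ge0}$ is a Markov chain (its kernel does not depend on the sign of $X(n-1)$) with exactly the positive-side dynamics of $(Y(n))_{n\ge0}$, and from $0$ it jumps to $|\eta_n|=|\varepsilon_n\gamma_n|=\gamma_n$, matching the restart law of $(Y(n))$. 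So $(|X(n)|)_{n\ge0}\overset{d}{=}(Y(n))_{n\ge0}$.

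Then I would compute the parameter $\alpha$ attached to this $(X(n))$. Because $\mathcal L(\xi'_1)=\mathcal L(-\xi_1)$, the walk $S'$ equals $-S$ in law, so its ascending ladder structure is the descending ladder structure of $S$; therefore $h'=h$, and from $\pr[S'(k)\le0]=\pr[S(k)\ge0]$ also $c'_1=c_1$. Moreover, as $\gamma_1\ge0$ and $\varepsilon_1$ is an independent fair sign, $\{\eta_1>0\}=\{\varepsilon_1=1,\ \gamma_1\ge1\}$, on which $h(\eta_1)=h(\gamma_1)$, and $\{\eta_1<0\}=\{\varepsilon_1=-1,\ \gamma_1\ge1\}$, on which $h'(-\eta_1)=h(\gamma_1)$; hence
\[
\bE[h(\eta_1)\1_{\{\eta_1>0\}}]=\bE[h'(-\eta_1)\1_{\{\eta_1<0\}}]=\tfrac12\,\bE[h(\gamma_1)\1_{\{\gamma_1\ge1\}}],
\]
a common positive finite value. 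Substituting into \eqref{def:alpha} gives $\alpha=1/2$, so the limiting process of Theorem~\ref{skew} is $B^{1/2}$, that is, standard Brownian motion $B$.

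Finally I would conclude by the continuous mapping theorem. Since $\sigma'=\sigma$, the normalization in Theorem~\ref{skew} reduces to $X_n(t)=\frac{1}{\sigma\sqrt n}X(nt)$, so $|X_n(t)|=\frac1{\sigma\sqrt n}|X(nt)|$. By construction $X$ cannot change sign within a unit time-interval except when it leaves $0$ (from a positive state it moves to a nonnegative one, from a negative state to a nonpositive one, and from $0$ it is interpolated linearly), so $t\mapsto|X(t)|$ is precisely the linear interpolation of $(|X(k)|)_{k\ge0}$; together with the identification of laws above this yields $Y_n\overset{d}{=}|X_n|$ as random elements of $C([0,\infty))$. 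As $X_n\Rightarrow B$ by Theorem~\ref{skew} and $w\mapsto|w|$ is continuous on $C([0,\infty))$, we get $|X_n|\Rightarrow|B|$, hence $Y_n\Rightarrow|B|$, and restriction to $[0,1]$ completes the proof. The only genuinely delicate points are the identification $(|X(n)|)_{n}\overset{d}{=}(Y(n))_{n}$ — which is exactly where the assumptions $\mathcal L(\xi')=\mathcal L(-\xi)$ and symmetry of $\eta$ are used, and where one must check that $|X(\cdot)|$ is itself Markov — and the observation that passing to absolute values commutes with the piecewise-linear interpolation; everything else is bookkeeping on top of Theorem~\ref{skew}.
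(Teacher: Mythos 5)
Your proposal is correct, but it takes a genuinely different route from the paper. You deduce Corollary \ref{meander} from Theorem \ref{skew} by a symmetrization device: take $\xi'_n$ with $\mathcal L(\xi'_1)=\mathcal L(-\xi_1)$ and $\eta_n=\varepsilon_n\gamma_n$ with independent fair signs, check Dynkin's lumpability criterion to see that $(|X(n)|)_{n\ge0}$ is Markov with the same kernel as $(Y(n))_{n\ge0}$, verify $h'=h$, $c'_1=c_1$ and hence $\alpha=1/2$ (the paper itself records both the identification $|X|\overset{d}{=}Y$ under these symmetry hypotheses and the identities $h(-x)=h'(x)$, $c_1=c'_1$ in the Bernoulli discussion, so these steps are consistent with the text), note that taking absolute values commutes with linear interpolation because $X$ cannot change sign strictly inside an interpolation interval, and conclude by the continuous mapping theorem since $B^{1/2}=B$. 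All of these steps hold, including the checks that the denominator in \eqref{def:alpha} is positive and finite and that aperiodicity passes to $-\xi_1$. The paper does the opposite: it proves the Corollary first, directly and self-containedly, by establishing convergence of the one- and two-dimensional marginals through the decomposition over successive visits to $0$ (using Lemmas \ref{LemA}--\ref{LemC} and the Green-function estimate of Lemma \ref{LemB'}) and then tightness, precisely because this one-walk case is notationally lighter; the proof of Theorem \ref{skew} in Section \ref{proofoftheorem} is then an adaptation of those Section \ref{proofofcorollary} arguments. So your derivation is logically sound (the Theorem's proof uses the \emph{arguments} of Section \ref{proofofcorollary}, never the Corollary's statement, so there is no circularity), and it buys brevity — no separate fdd or tightness work for $Y$ — at the cost of invoking the harder, more general result; within the paper's architecture it would invert the intended order of exposition, since the Corollary's direct proof is the backbone on which the Theorem's proof is built.
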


 This process has been already considered in the literature.  Let us cite for instance \cite{PilipenkoPryhod'ko2014} where the jumps $\xi_k$ are bounded from below  by $-1$ but the $\gamma_k$ are $\mathbb N$-valued with distribution in  the domain of attraction of some stable distribution (possible without first moment).

  We follow the classical strategy  to prove  invariance principles \cite{Billingsley}. Firstly,  we show that the finite dimensional distribution of the processes $(X_n)_{n \geq 0}$ and  $(Y_n)_{n \geq 0}$ do converge to the suitable limit, then the tightness of these sequences of  processes. This ``pedestrian'' approach is of interest as soon as we have a precise control of both the fluctuations of the random walks on each  half line $\mathbb Z^-$ and $\mathbb Z^+$ and the steps starting from $0$; in particular, it is quite flexible to study processes whose trajectories cannot be decomposed exactly  into independent pieces, as for instance the reflected random walk on $\mathbb Z^+$ with elastic reflections at $0$.

We feel it is easier to prove Corollary \ref{meander}, in which case there is only the  random walks $(S(n))_{n \geq 0}$  to consider; this reduces considerably the notations. Thus, the paper is organized as follows: in Section \ref{notationsandauxiliaryestimates} we recall some classical results on fluctuations of random walks and their consequences, Section \ref{proofofcorollary} is devoted to the proof of Corollary \ref{meander} and  in  Section \ref{proofoftheorem} we explain how to adapt this argument to prove the main theorem.

\section{Notations and  auxiliary estimates}\label{notationsandauxiliaryestimates}
\subsection{Notations} 
 Firstly, let us  fix notations. We denote $S=(S(n))_{n \geq 0}$ the classical random walks with steps $\xi_k$ defined by $S(0)=0$ and $S(n)= \xi_1+\ldots \xi _n$ for any $n \geq 1$ and  by $\mathcal F_n$ the $\sigma$-field generated by the random variables $\xi_1,  \gamma_1, \ldots, \xi_n,   \gamma_n$ for $n \geq 1$; by convention $\mathcal F_0= \{\emptyset, \Omega\}$. 
 
Since the random variables $\xi_k$   are centered,  the process $ (Y(n))_{n \geq 0}$   visits $0$ infinitely often; thus,   we introduce the following  sequences $(\tau^Y_l)_{l \geq 0}$  of stopping times with respect to the filtration $(\mathcal F_n)_{n \geq 0}$ defined by $ \tau^Y_0 = 0$ and, for any $l \geq 0$, 
	\[\tau^Y_{l+1} = \inf \{ n > \tau^Y_l \mid  Y(n) =  0\}.
	\]
In order to establish an invariance principle for  the process $(Y(n))_{n \geq 0}$, we  control the excursions of this process between  two visits of $0$; after each visit, the  first transition  jump  has distribution $\mathcal L(\gamma_1)$, it is thus natural to introduce the following  random variables $U_{k, n}, 0\leq k\leq n, $ defined by
\[
U_{k, n}:= \gamma_{k+1}+ \xi_{k+2}  + \ldots + \xi_n.
\]
Furthermore, the  excursions of $(Y(n))_{n \geq 0}$  between  two visits of $0$
  coincide with   some parts of the trajectory of  $(S(n))_{n \geq 0}$, thus their study is related to   the fluctuations of this random walk. Hence, we need also to introduce the sequence of \textcolor{black}{strictly} descending ladder epochs $(\ell_l )_{l \geq 0}$  of the random walk $S=(S(n))_{n \geq 0}$ defined inductively by $\ell_0 =0$ and, for any $l \geq 1$,
\[
\ell_{l+1}  := \min\{n>\ell_l \mid S(n) \textcolor{black}{<} S{(\ell_l) }\}.
\]
The random variables $\ell_{1}  ,\ell_{2}   -\ell_{1}, \ell_{3}  -\ell_{2},\ldots  $ are $\mathbb P$-a.s. finite and i.i.d.. The same property holds for the random variables
$S{(\ell_{1})  }, S{(\ell_{2} ) }-S{(\ell_{1})  }, S{(\ell_{3})  }-S{(\ell_{2})},\ldots $; in other words,  the processes  $(\ell_l )_{l\geq 0}$ and $(S{(\ell_l )})_{l \geq 0}$  are random walks on $\mathbb N$  and  $\mathbb Z$  with respective distribution $\mathcal L(\ell_1)$  and $\mathcal L(S{(\ell_1) }  )$.

We denote by $p_t(x) = \frac{1}{\sqrt{2\pi t}}e^{-x^2/2t}$ the Gaussian denstiy. The transition density of a skew Brownian motion with parameter $\alpha \in [0,1]$ is given by
\begin{align}
&p^\alpha_t(x,y) = \1_{\{x= 0\}}\left[ 2\alpha p_t(y)\1_{\{y>0\}} + 2(1-\alpha)p_t(y)\1_{\{y<0\}}\right] \notag \\
&+\1_{\{x>0\}} \left[ \left( p_t(y-x) + (2\alpha - 1)p_t(x+y)\right)\1_{\{y>0\}} + 2(1-\alpha)p_t(y-x) \1_{\{y<0\}} \right] \notag \\
& + \1_{\{x<0\}}\left[ \left( p_t(y-x) + (1-2\alpha)p_t(y+x)\right)\1_{\{y<0\}} + 2\alpha p_t(y-x)\1_{\{y>0\}}\right],
\label{skewdensity} 
\end{align}
(see \cite{RY}, page 87).

At last, in order to simply the text, we use the following notations.
Let $u=(u_n)_{n \geq 0}$ and $v=(v_n)_{n \geq 0}$ be two sequences of positive reals; we write

$\bullet\quad $ {\it
$u\stackrel{c}{\preceq}v $} (or simply $u \preceq v $) when $u_n \leq c v_n$ for some constant $c>0$ and $n$ large enough;

$\bullet\quad $ $u_n \sim v_n$ if $\lim_{n\to \infty} \frac{u_n}{v_n} = 1$.  

$\bullet\quad $ $u_n \approx v_n$ if $\lim_{n\to \infty} (u_n - v_n) = 0$.   
 
\subsection{On the fluctuation of random walks} \label{sec:2.1} 

Let $h$ be the Green function of the random walk $ (S{(\ell_l) } )_{l \geq 0}$, called sometimes  the ``descending renewal function''   of $S$, defined by
\begin{equation} \label{def:h}
h(x) = \begin{cases} 1+ \displaystyle \sum_{l=1}^{+\infty} \pr[S(\ell_l) \geq  -x] & \text{if } \quad x \geq 0,\\
0 & \text{otherwise.}\end{cases}
\end{equation} 

The function  $h$ is  harmonic for the random walk $(S(n))_{n \geq 0}$  killed when it reaches  the negative half line $(-\infty; 0]$; namely, for any $x \geq 0,$
\[
\bE[h(x+\xi_1); x+\xi_1>0  ] = h(x).
\]
This holds for any oscillating random walk, possible without finite second moment. 
 
Moreover,  the  function $h$ is  increasing, $h(0)=1$ and $h(x) = O(x)$ as $x \to \infty$ (see \cite{AGKV05}, p. 648).
 
We have also to take into account that the random walk $S$ may start from another point than the origin; hence,  for any $x \geq 0$, we set $\tau^S(x):= \inf\{ n \geq 1: x+S(n)\leq 0\}$;  it holds 
\[
[\tau^S(x)>n]= [L_n> -x]
\]
where    $L_n=\min(S(1), \ldots, S(n))$.

The following  result is a consequence of Theorem A in \cite{Kozlov} and Theorem II.6 in \cite{LePagePeigne1}. Recall that $c_1$ was defined in Theorem \ref{skew}. 
\begin{lemma} \label{LemA}
For any $x  \geq 0$, it holds that 
\begin{equation}\label{LemA1}
\pr[\tau^S(x)> n]  \sim  c_1 \frac{h(x)}{\sqrt{n }} \qquad as \quad n \to +\infty. 
\end{equation}
Moreover, there exists  a constant  $C_1 > 0$ such that for any $x \geq 0$ and $n \geq 1,$
\begin{equation}\label{LemA2}
\pr[\tau^S(x)> n] \leq C_1\frac{h(x)}{\sqrt{n}}.
\end{equation}
\end{lemma}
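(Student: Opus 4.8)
The plan is to identify $\pr[\tau^S(x) > n]$ with the tail of the first strictly descending ladder epoch of the random walk $x + S(\cdot)$ started from $x$, and then to invoke the two classical inputs cited just before the statement. More precisely, $[\tau^S(x) > n] = [L_n > -x]$, so the left-hand side of \eqref{LemA1} is exactly $\pr[S(1) > -x, \ldots, S(n) > -x]$, the probability that a random walk with centered, finite-variance, aperiodic increments stays strictly above the level $-x$ for $n$ steps. For the asymptotic \eqref{LemA1}, the idea is to apply Theorem A of \cite{Kozlov}: under the second-moment and aperiodicity hypotheses, the probability that $S$ stays above a fixed level up to time $n$ is asymptotically $c\, h(x)/\sqrt n$, where $h$ is precisely the descending renewal function \eqref{def:h} (harmonic for $S$ killed on $(-\infty,0]$, as recorded above), and the constant must be $c_1$ by the same computation that produced the Spitzer-type series formula for $c_1$ in Theorem \ref{skew}; alternatively, Theorem II.6 in \cite{LePagePeigne1} gives the same local/integral asymptotics with the renewal function normalised so that $h(0)=1$, which pins down the multiplicative constant to be exactly $c_1$ as defined. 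The harmonicity relation $\bE[h(x+\xi_1); x+\xi_1 > 0] = h(x)$ is what makes $h(x)$ the correct spatial profile, so that the dependence on the starting point $x$ factors out cleanly in the limit.

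For the uniform bound \eqref{LemA2} I would argue as follows. One cannot simply quote \eqref{LemA1} since the asymptotic is only pointwise in $x$; the uniformity over all $x \ge 0$ and $n \ge 1$ requires a genuine inequality. The standard route is a submultiplicativity / renewal argument: decompose the event $[\tau^S(x) > n]$ according to the values of the successive strictly ascending ladder heights of $S$ before the walk is killed below $0$, or equivalently use the optional-stopping identity $h(x) = \bE[h(x + S(n)); \tau^S(x) > n]$ together with the fact that $h$ is increasing and $h(y) \ge h(0) = 1 > 0$ for $y \ge 0$; combining this with a uniform lower bound of the form $\bE[h(x+S(n))\mid \tau^S(x)>n] \gtrsim \sqrt n$ (which follows from the local central limit theorem applied to the conditioned walk, whose typical position at time $n$ is of order $\sqrt n$, and from $h(y) = \Theta(y)$ for large $y$ while $h(y) \ge 1$ for all $y \ge 0$) yields $\pr[\tau^S(x) > n] \le C_1 h(x)/\sqrt n$. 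This is the argument underlying the estimates of \cite{AGKV05}, and one can simply cite that the bound $\pr[\tau^S(x)>n] \preceq h(x)/\sqrt n$ holds there; it is exactly the non-asymptotic companion of \eqref{LemA1}.

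The main obstacle is pinning down that the constant in \eqref{LemA1} is literally the $c_1$ of Theorem \ref{skew} and not some other representation of the same number. Theorem A in \cite{Kozlov} and Theorem II.6 in \cite{LePagePeigne1} are each stated with their own normalisation of the renewal function and their own packaging of the constant (one via a Spitzer series, one via $\bE[-S(\ell_1)]$ or the ladder-height mean), so the real work is a short reconciliation: check that with the normalisation $h(0)=1$ chosen in \eqref{def:h}, the constant appearing in the cited theorems equals $\frac{1}{\sqrt\pi}\exp\!\big(\sum_{k\ge1}\frac1k(\pr[S(k)\ge 0]-\frac12)\big)$. This is a routine but slightly delicate bookkeeping step using the Baxter–Spitzer identities for the generating functions of ladder epochs; once it is done, \eqref{LemA1} follows immediately and \eqref{LemA2} is the corresponding uniform bound from \cite{AGKV05}.
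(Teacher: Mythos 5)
Your treatment of \eqref{LemA1} coincides with the paper's: the paper gives no argument at all for this lemma, simply recording that it is a consequence of Theorem A in \cite{Kozlov} and Theorem II.6 in \cite{LePagePeigne1}, and your identification $[\tau^S(x)>n]=[L_n>-x]$ plus the reconciliation of normalisations (fixing $h(0)=1$ and matching the Spitzer-series constant $c_1$) is exactly the reading intended there. So for the asymptotic part there is nothing to object to.

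For \eqref{LemA2}, however, your self-contained sketch has a genuine gap, and it sits precisely where the paper warns that ``despite appearances, inequality \eqref{LemA2} is more difficult to obtain than equivalence \eqref{LemA1}''. The identity $h(x)=\bE[h(x+S(n));\,\tau^S(x)>n]$ is fine, and it reduces the claim to the uniform lower bound $\bE[h(x+S(n))\mid \tau^S(x)>n]\geq c\sqrt{n}$ for \emph{all} $x\geq 0$ and $n\geq 1$. But this bound does not ``follow from the local central limit theorem applied to the conditioned walk'': for fixed $x$ the meander convergence (Lemma \ref{LemC2}) only yields it asymptotically in $n$, with a constant a priori depending on $x$, and the delicate regime is $x$ growing with $n$ but $x=o(\sqrt{n})$, where one needs a version of the conditioned (L)CLT uniform in the starting point. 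Such uniform statements exist in the literature (this is essentially what \cite{LePagePeigne1} and the estimates used in \cite{ABKV} provide), but they are of the same depth as \eqref{LemA2} itself, so as written your argument is circular rather than a proof. Your fallback -- citing the non-asymptotic bound from the branching-processes-in-random-environment literature -- is in the same spirit as what the paper actually does, except that the paper attributes it to Theorem II.6 of \cite{LePagePeigne1} (together with \cite{Kozlov}), whereas \cite{AGKV05} is invoked in the paper only for the properties of $h$ (monotonicity, $h(0)=1$, $h(x)=O(x)$) and for the conditional invariance principle; if you want to rest \eqref{LemA2} on a citation, you should point to a reference that states the uniform bound explicitly rather than assert it is ``exactly'' in \cite{AGKV05}.
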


 Let us emphasize that, despite appearances,  inequality (\ref{LemA2}) is more difficult to  obtain than equivalence \eqref{LemA1}.  
 Let denote by $\tilde h$ the ascending renewal function of $S$, that is also the descending renewal function of the random walk $\tilde S = -S$ defined as in \eqref{def:h}.
 The following statement holds.
\begin{lemma} \label{LemA'}
There exist   constants  $c_2, C_2>0$ such that,  for any $x, y \geq 0$, 
\begin{equation}\label{LemA'1}   
\pr[\tau^S(x)> n, x+S(n) = y]  \sim  c_2 \frac{h(x)\tilde h(y)}{n^{3/2}} \qquad {\it as} \quad n \to +\infty, 
\end{equation}
and, for any $n \geq 1,$
\begin{equation}\label{LemA'2}
\pr[\tau^S(x)> n,  x+S(n)=y] \leq C_2\frac{h(x)\tilde h(y)}{n^{3/2}}.
\end{equation}
\end{lemma}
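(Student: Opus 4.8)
The plan is to read both estimates as local limit theorems for the walk $S$ killed on leaving $(0,+\infty)$. The cheapest but useful first step is a time--reversal identity that exhibits the roles of $h$ and $\tilde h$: applying $(\xi_1,\dots,\xi_n)\stackrel{d}{=}(\xi_n,\dots,\xi_1)$ path by path gives, for $x,y\geq1$,
\[
\pr[\tau^S(x)>n,\ x+S(n)=y]=\pr[\tilde\tau(y)>n,\ y+\tilde S(n)=x],
\]
where $\tilde S=-S$ and $\tilde\tau(y):=\inf\{k\geq1:\ y+\tilde S(k)\leq0\}$. Since the descending renewal function of $\tilde S$ is $\tilde h$ and its ascending one is $h$, both sides carry the same conjectural behaviour $c_2\,h(x)\tilde h(y)\,n^{-3/2}$, so one may follow whichever endpoint is convenient; the boundary values $x=0$ or $y=0$ (the left--hand side vanishing identically in the latter case) and the usual strict/weak ladder correspondence only affect bounded factors.

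The upper bound \eqref{LemA'2} then follows from \eqref{LemA2} by a three--block decomposition. For $n$ large set $n=n_1+n_2+n_3$ with $n_1=n_3=\lfloor n/3\rfloor$; the Markov property gives
\[
\pr[\tau^S(x)>n,\ x+S(n)=y]=\sum_{z_1,z_2\geq1}\pr[\tau^S(x)>n_1,\ x+S(n_1)=z_1]\;\pr[\tau^S(z_1)>n_2,\ z_1+S(n_2)=z_2]\;\pr[\tau^S(z_2)>n_3,\ z_2+S(n_3)=y].
\]
Bound the middle factor by Gnedenko's uniform local estimate, $\pr[\tau^S(z_1)>n_2,\ z_1+S(n_2)=z_2]\leq\sup_{w}\pr[S(n_2)=w]\preceq n_2^{-1/2}$, which does not depend on $z_1,z_2$; pulling it out, the double sum factorises into $\sum_{z_1}\pr[\tau^S(x)>n_1,\ x+S(n_1)=z_1]=\pr[\tau^S(x)>n_1]$ and $\sum_{z_2}\pr[\tau^S(z_2)>n_3,\ z_2+S(n_3)=y]$, the latter being, by the reversal identity above, exactly $\pr[\tilde\tau(y)>n_3]$. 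Applying \eqref{LemA2} to $S$ and to $-S$ bounds these by $C_1h(x)n_1^{-1/2}$ and $C_1\tilde h(y)n_3^{-1/2}$, and since $n_1\asymp n_2\asymp n_3\asymp n$ we obtain $\pr[\tau^S(x)>n,\ x+S(n)=y]\preceq h(x)\tilde h(y)\,n^{-3/2}$; the finitely many small $n$ are trivial because $h,\tilde h\geq1$.

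The sharp equivalence \eqref{LemA'1} is the substantial part, and I would prove it exactly as \eqref{LemA1}, i.e. via the Wiener--Hopf factorisation together with the Tauberian theorem of \cite{Kozlov} and the local estimate of \cite{LePagePeigne1}, now keeping track of the endpoint. Concretely, $\sum_{n\geq0}t^n\pr[\tau^S(x)>n,\ x+S(n)=y]$ factorises into a renewal series on the strictly descending ladder heights $\{S(\ell_l)\}$, analytic at $t=1$ and carrying the factor $h(x)$; a symmetric series on the ascending ladder heights carrying $\tilde h(y)$; and a remaining factor whose only singularity at $t=1$ is of type $(1-t)^{1/2}$. It is there that finite variance and aperiodicity enter, through Gnedenko's $\pr[S(n)=z]\sim(\sigma\sqrt{2\pi n})^{-1}e^{-z^2/2\sigma^2n}$ and through $\pr[\ell_1>n]\sim c_1n^{-1/2}$, both of which already underlie \eqref{LemA1}. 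Transferring the $(1-t)^{1/2}$ behaviour by singularity analysis produces the rate $n^{-3/2}$ and identifies $c_2$ as an explicit function of $c_1$, $\tilde c_1$ and the ladder--height means; by the reversal symmetry this constant is genuinely independent of $x$ and $y$. I expect this last step -- pinning down the $(1-t)^{1/2}$ asymptotics finely enough to read off a clean constant, not merely an order of magnitude -- to be the main obstacle, and it is what separates \eqref{LemA'1} from the comparatively easy \eqref{LemA'2}; a legitimate shortcut is to invoke the local limit theorem for random walks conditioned to stay positive, which in the aperiodic finite--variance case yields \eqref{LemA'1} directly with $h$ and $\tilde h$ at the two ends.
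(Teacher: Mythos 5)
Your proposal is correct and follows essentially the same route as the paper: for \eqref{LemA'2} the paper also splits the trajectory into three blocks of length $\approx n/3$, bounds the middle block by the uniform (Gnedenko) local limit estimate $\sup_z\pr[S(m)=z]\preceq m^{-1/2}$, and controls the two outer blocks by Lemma \ref{LemA} applied to $S$ and to the reversed walk $-S$, yielding $C_2\,h(x)\tilde h(y)n^{-3/2}$ exactly as in your Markov-property decomposition. For the equivalence \eqref{LemA'1} the paper does not redo the Wiener--Hopf/Tauberian analysis you sketch but simply invokes the known local limit theorem for first-passage/conditioned-to-stay-positive walks (Proposition 11 of Doney and Theorem II.7 of Le Page--Peign\'e), which is precisely the ``legitimate shortcut'' you mention.
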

The constants $c_1$ and $c_2$ are closely connected, see (\ref{c1c2}) below.

\begin{proof}
 The first assertion  follows from Proposition 11 in \cite{Doney12}   (see also Theorem II.7  in  \cite{LePagePeigne1}).  
Inequality (\ref{LemA'2}) may be deduced from Lemma  \ref{LemA}   following  the strategy of  Proposition 2.3 in \cite{ABKV}. Let us detail this argument.

For any $n\geq 1$   and  $1\leq k \leq n$, set
$\widetilde{S}_k= -( \xi_n+\ldots + \xi_{n-k+1})$ and $\widetilde{L}_k=\min(\widetilde{S}_1, \ldots, \widetilde{S}_k)$. 
We set $A_{n, x}= [L_{[n/3]}>-x]$, $\widetilde{A}_{n, y}=[\widetilde{L}_{[n/3]}> -y]$ and  $\overline{A}_{n, x, y}=[ x+S(n)= y ]$.
It holds
\begin{align*}
[\tau^S(x)> n, x+S(n) = y] &=
[ S(1) > -x, \ldots, 
 S(n)>-x,  x+S(n)= y]\\
 &\subset A_{n, x}\cap \widetilde{A}_{n, y}\cap \overline{A}_{n, x, y}.
\end{align*}
On the one hand, the events  $A_{n, x}$ and $\widetilde{A}_{n, y}$ are independent and measurable with respect to the $\sigma$-field  $\mathcal A_n$ generated by $\xi_1, \ldots, \xi_{[n/3]}$ and $\xi_{[2n/3]+1}, \ldots, \xi_n$. On the other hand, the random variable $T_n =S(n)+S([n/3])-S([2n/3])$ is  $\mathcal A_n$-measurable and the event $\overline{A}_{n, x, y}$ may be rewritten as
\[\overline{A}_{n, x, y}=[ S([2n/3])-S([n/3])= -x+  y -T_n].
\] 
By the classical local limit theorem in $\mathbb R$,  there exists a constant $C>0$ such that
\[
\mathbb P[\overline{A}_{n, x, y}\mid \mathcal A_n]\leq \sup _{z\in \mathbb R}\mathbb P[ S([2n/3]-[n/3])=z]\leq  {C \over \sqrt{n}}.
\]
Moreover, by Lemma \ref{LemA},  
$$\mathbb P[A_{n, x}]=\mathbb P[\tau^S(x)>[n/3]]\leq 2C_1{  h(x)\over \sqrt{n}},$$
and 
$$\mathbb P[\widetilde{A}_{n, y}]=\mathbb P[\tau^{\tilde S}(y)>[2n/3]-[n/3]]
\leq  2C_1{  \tilde h(y)\over \sqrt{n}}.$$
Inequality (\ref{LemA'2}) follows with $C_2=  4C C_1^2$.
\end{proof}
Lemma \ref{LemA'} is the key point to control the behavior as $n \to +\infty$ of the probability of the events $[\tau^S(x) = n], x \geq 0$.  The following result is more precise than Lemma \ref{LemA}.
\begin{lemma}\label{LemA''}
	 For any $x  \geq 0$,  as $n \to +\infty$, 
	$$ \pr[\tau^S(x) = n] \sim  {c_1 \over 2 }   \ h(x)\  {1\over  n^{3/2}}  \qquad {\it as} \quad n \to +\infty, $$
	and  there exists   a constant   $    C_3>0$ such that,  for any  $x \geq 0$ and $n\geq 1$,
		\[
		\pr[\tau^S(x) = n] \leq  C_3\frac{h({x})}{n^{3/2}}.
		\]
\end{lemma}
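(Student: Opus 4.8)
The plan is to deduce both assertions from Lemma~\ref{LemA'}, which controls the joint law of the killing time and of the position of the walk at that time. First I would write, for $n\ge 2$,
\[
\pr[\tau^S(x)=n]=\sum_{y\ge 1}\pr[\tau^S(x)>n-1,\ x+S(n-1)=y]\ \pr[\xi_1\le -y],
\]
which follows by conditioning on $\mathcal F_{n-1}$: the event $\{\tau^S(x)=n\}$ is the event that the walk stays strictly positive up to time $n-1$, sits at some value $y:=x+S(n-1)\ge 1$ there, and then $\xi_n\le -y$. The case $n=1$ is trivial, $\pr[\tau^S(x)=1]=\pr[\xi_1\le -x]\le 1\le h(x)$.

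For the uniform bound I would insert \eqref{LemA'2} into each summand and use $n-1\ge n/2$ for $n\ge 2$, which gives
\[
\pr[\tau^S(x)=n]\ \preceq\ \frac{h(x)}{n^{3/2}}\sum_{y\ge 1}\tilde h(y)\,\pr[\xi_1\le -y].
\]
The series converges because $\tilde h(y)=O(y)$ and, with $Z:=\max(-\xi_1,0)$, one has $\sum_{y\ge 1}y\,\pr[\xi_1\le -y]=\bE[Z(Z+1)/2]\le \bE[\xi_1^2]<\infty$ by assumption. Enlarging the constant so as also to cover $n=1$, this yields $\pr[\tau^S(x)=n]\le C_3\,h(x)\,n^{-3/2}$.

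For the equivalence I would apply \eqref{LemA'1} to each fixed $y$, so that $n^{3/2}\pr[\tau^S(x)>n-1,\ x+S(n-1)=y]\to c_2\, h(x)\,\tilde h(y)$; the estimate just obtained provides a dominating sequence in $y$ that is summable and independent of $n$, so the dominated convergence theorem for series gives
\[
n^{3/2}\,\pr[\tau^S(x)=n]\ \xrightarrow[n\to\infty]{}\ c_2\,h(x)\sum_{y\ge 1}\tilde h(y)\,\pr[\xi_1\le -y]\ =:\ a(x),
\]
with $0<a(x)<\infty$. It then remains only to identify $a(x)$. Since $\pr[\tau^S(x)>N]=\sum_{n>N}\pr[\tau^S(x)=n]$ (the walk oscillates, so $\tau^S(x)<\infty$ a.s.) and $\pr[\tau^S(x)=n]\sim a(x)\,n^{-3/2}$ with $a(x)>0$, summation of equivalents and an integral comparison give $\pr[\tau^S(x)>N]\sim a(x)\sum_{n>N}n^{-3/2}\sim 2\,a(x)\,N^{-1/2}$. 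Comparing with \eqref{LemA1}, i.e. $\pr[\tau^S(x)>N]\sim c_1\,h(x)\,N^{-1/2}$, forces $2\,a(x)=c_1\,h(x)$, whence $\pr[\tau^S(x)=n]\sim \tfrac{c_1}{2}\,h(x)\,n^{-3/2}$. As a byproduct this also yields the identity $c_2\sum_{y\ge 1}\tilde h(y)\,\pr[\xi_1\le -y]=c_1/2$, consistent with the relation between $c_1$ and $c_2$.

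The only delicate point is the interchange of $\lim_n$ and $\sum_y$ in the third step; this is precisely what the \emph{uniform} estimate \eqref{LemA'2} of Lemma~\ref{LemA'} is designed to make legitimate, once combined with the finiteness of $\sum_{y}y\,\pr[\xi_1\le -y]$ coming from the finite second moment of $\xi_1$. Everything else is a soft Abelian argument, so no explicit renewal computation of the constant is needed.
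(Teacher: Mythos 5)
Your proposal is correct and follows essentially the same route as the paper: the same decomposition over the exit position $y$, the bound and the pointwise limit both taken from Lemma \ref{LemA'} with dominated convergence justified by $\tilde h(y)=O(y)$ and $\bE[\xi_1^2]<\infty$, and the constant identified by comparison with Lemma \ref{LemA}, which is exactly how the paper obtains the relation $c_1=2c_2\sum_{y\ge 1}\tilde h(y)\pr[y+\xi_1\le 0]$. The only difference is cosmetic: you spell out the Abelian summation of the local equivalents to recover the tail asymptotic, a step the paper leaves implicit in the phrase ``comparing with Lemma \ref{LemA}''.
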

\begin{proof}
	We write 
	\begin{align*}
	\pr[\tau^S(x) = n] &= \sum_{y\geq 1}  \pr[\tau^S(x) > n-1, x+S(n-1)  = y, y + \xi_n \leq 0]\\
	&= \sum_{y\geq 1}  \pr[\tau^S(x) > n-1, x+S(n-1)  = y] \pr[y + \xi_1 \leq 0].
	\end{align*}
	From   Lemma \ref{LemA'}  and   the fact that $\tilde h(y) =O(y)$ as $y \to +\infty$,    for any $y \geq 1$,
 	\begin{align*}
 \displaystyle n^{3/2}&  \pr[\tau^S(x) > n-1, x+S(n-1)= y]\pr[ \xi_1 \leq -y]  \\
	&\preceq \  h(x) \ \tilde h(y)\ \pr[ y+\xi_1 \leq 0]\\
	&\preceq  \   h(x) \   (1+y)\  \pr[ y+\xi_1 \leq 0], 
	\end{align*}	
with $\displaystyle \sum_{y \geq 1}(1+y)\  \pr[ y+\xi_1 \leq 0]<  +\infty$ since $\mathbb E(\xi_1^2)<+\infty$. 

Thus, the second assertion of the Lemma holds;  with  (\ref{LemA'2}) and using the dominated convergence theorem,  it yields
	\begin{align*}
	&\lim_{n\to +\infty} n^{3/2}\pr[\tau^S(x) = n]\\
	 &= \lim_{n\to +\infty} n^{3/2}\sum_{y\geq 1}  \pr[\tau^S(x) > n-1, S(n-1)+ x = y] \pr[y+ \xi_1 \leq 0 ]\\
	& = c_2 h(x) \sum_{y\geq 1}  \tilde h(y)\pr[ y+\xi_1 \leq 0].
	\end{align*}
	 Comparing with   Lemma  \ref{LemA}, the constants $c_1$ and $c_2$ satisfy the equality
\begin{equation} \label{c1c2}
c_1= 2c_2\sum_{y\geq 1} \tilde h(y) \mathbb P[y+\xi_1\leq 0] 
\end{equation}
 and the first assertion of the Lemma follows. 

\end{proof}

\subsection{On the fluctuation of the perturbed random walk $Y$}\label{sec4.1}
 
  Firstly,  notice that the sequence $(\tau^Y_l)_{l \geq 0}$ is a strictly increasing random walk on $\mathbb N$, with i.i.d. increments distributed as $\tau^Y_1$. Thus, its potential $\displaystyle \sum_{l=1}^{+\infty} \pr[\tau^Y_l = n] $ is finite for any $n \geq 0$.  In this subsection, we prove that  the distributions of $\tau^S(x)$ and $\tau^Y_1$ have the same behavior at infinity and  we describe the behavior as $n \to +\infty$ of the potential of the random walk $(\tau^Y_l)_{l \geq 0}$. 
\begin{lemma} \label{LemB} It holds, as $n \to +\infty, $
\[\pr[\tau^Y_1>n] \sim   c_1{  \bE[h(\gamma_1)\1_{\{\gamma_1>0\}}]\over \sqrt{n}}. 
\]
More precisely, 
\[
	 \pr[\tau^Y_1 = n] \sim     {c_1\over 2} \   \mathbb E[h(\gamma_1)\1_{\{\gamma_1>0\}}]  \  {1\over n^{3/2}}. 
	\]
	\end{lemma}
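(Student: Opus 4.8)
The plan is to express the first return time $\tau^Y_1$ in terms of the hitting time $\tau^S(\cdot)$ of the classical random walk $S$ and then to push Lemmas \ref{LemA} and \ref{LemA''} through the series obtained by conditioning on the first jump $\gamma_1$.

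\emph{Step 1: decomposition at the first jump from $0$.} Since $Y(0)=0$, formula \eqref{Y(n)} gives $Y(1)=\gamma_1$. On $\{\gamma_1=0\}$ one has $\tau^Y_1=1$. On $\{\gamma_1=y\}$ with $y\geq 1$ the excursion starts at height $y>0$ and, as long as it remains positive, $Y(k)=y+\xi_2+\cdots+\xi_k$, the value $0$ being attained the first time this quantity becomes $\leq 0$; since $(\xi_k)_{k\geq 2}$ has the same law as $(\xi_k)_{k\geq 1}$ and is independent of $\gamma_1$, this means that, conditionally on $\{\gamma_1=y\}$, the random variable $\tau^Y_1$ is distributed as $1+\tau^S(y)$. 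Consequently, for every $n\geq 1$,
\[
\pr[\tau^Y_1>n]=\sum_{y\geq 1}\pr[\gamma_1=y]\,\pr[\tau^S(y)>n-1],
\]
and, for every $n\geq 2$,
\[
\pr[\tau^Y_1=n]=\sum_{y\geq 1}\pr[\gamma_1=y]\,\pr[\tau^S(y)=n-1].
\]

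\emph{Step 2: passage to the limit under the sum.} For the first identity, Lemma \ref{LemA} gives $\sqrt{n}\,\pr[\tau^S(y)>n-1]\to c_1 h(y)$ as $n\to\infty$ for each fixed $y\geq 1$, while \eqref{LemA2} gives the bound $\sqrt{n}\,\pr[\tau^S(y)>n-1]\leq \sqrt 2\,C_1 h(y)$ uniformly in $y\geq 0$ and $n\geq 2$. Because $h(y)=O(y)$, there is a constant $C$ with $h(y)\leq C(1+y)$ for all $y\geq 0$; since $\bE[\gamma_1]<\infty$, the function $y\mapsto \pr[\gamma_1=y]\,C(1+y)$ is summable and dominates the summands, so dominated convergence yields
\[
\sqrt{n}\,\pr[\tau^Y_1>n]\ \longrightarrow\ c_1\sum_{y\geq 1}\pr[\gamma_1=y]\,h(y)=c_1\,\bE[h(\gamma_1)\1_{\{\gamma_1>0\}}].
\]
The second identity is treated identically, replacing Lemma \ref{LemA} by Lemma \ref{LemA''} (which gives $n^{3/2}\,\pr[\tau^S(y)=n-1]\to \tfrac{c_1}{2}h(y)$ together with $n^{3/2}\,\pr[\tau^S(y)=n-1]\leq 2^{3/2}C_3 h(y)$ for $n\geq 2$); this gives $n^{3/2}\,\pr[\tau^Y_1=n]\to \tfrac{c_1}{2}\bE[h(\gamma_1)\1_{\{\gamma_1>0\}}]$.

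\emph{Main obstacle.} Step 1 is purely the Markov decomposition of the first excursion and is routine. The only point requiring care is the interchange of limit and summation in Step 2, and this is exactly what the \emph{uniform} estimates \eqref{LemA2} and the inequality in Lemma \ref{LemA''} are for, combined with $h(y)=O(y)$ and the first-moment hypothesis on $\gamma_1$; without a bound uniform in $y$ one could not conclude. (Alternatively, the first statement of Lemma \ref{LemB} follows from the second by summing the tail, since $\sum_{m>n}m^{-3/2}\sim 2 n^{-1/2}$. Note finally that $h\geq 1$ on $\mathbb N$ and $\pr[\gamma_1=0]<1$ guarantee $\bE[h(\gamma_1)\1_{\{\gamma_1>0\}}]\geq \pr[\gamma_1>0]>0$, so the limiting constant is nonzero.)
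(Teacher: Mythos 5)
Your proof is correct, and the only point where it departs from the paper is in how the local estimate is obtained. For the tail asymptotics your argument coincides with the paper's: the same conditioning on the first jump $\gamma_1$ (with the same identity $\pr[\tau^Y_1>n]=\sum_{y\geq 1}\pr[\gamma_1=y]\pr[\tau^S(y)>n-1]$), the same use of \eqref{LemA1} and the uniform bound \eqref{LemA2} together with $h(y)=O(y)$ and $\bE[\gamma_1]<\infty$ to justify dominated convergence. For the second assertion your route is more direct than the paper's: you apply Lemma \ref{LemA''} (both its asymptotics and its uniform bound) termwise under the sum over the values of $\gamma_1$, whereas the paper does not invoke Lemma \ref{LemA''} at this point but instead re-expands $\pr[\tau^Y_1=n+2]$ by conditioning on the last increment and on the position $x+S(n)=z$ just before the fatal jump, and then applies Lemma \ref{LemA'}; that computation is essentially the proof of Lemma \ref{LemA''} carried out once more with the extra average over $\gamma_1$, its limit being identified with the stated constant through the relation \eqref{c1c2} between $c_1$ and $c_2$. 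Your version is a legitimate modularization: since Lemma \ref{LemA''} already packages the pointwise limit $\tfrac{c_1}{2}h(x)n^{-3/2}$ and a bound $C_3h(x)n^{-3/2}$ uniform in $x$, nothing beyond $h(y)\preceq 1+y$ and the integrability of $\gamma_1$ is needed, and the constant $c_2$ never enters. What the paper's longer route buys is only the incidental re-use of the bivariate bound \eqref{LemA'2} and of \eqref{c1c2}; mathematically both arguments rest on the same estimates, and your domination step (including the correction factors $\sqrt{2}$ and $2^{3/2}$ from replacing $n$ by $n-1$, and the observation that $\gamma_1=0$ forces $\tau^Y_1=1$) is exactly what is required.
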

\begin{proof} Firstly 
\begin{align*}
\pr[\tau^Y_1 > n] &= \pr[\gamma_1>0, \gamma_1+\xi_2> 0, \ldots, \gamma_1+\xi_2+\ldots +\xi_n  >0]  
\\
&=\sum_{x > 0} \pr[x+\xi_2  > 0, \ldots, x+\xi_2+\ldots +\xi_n  >0] \mathbb P[\gamma_1=x]
\\
&=\sum_{x> 0}\pr[x+S(1)  > 0, \ldots, x+S(n-1)   >0]\mathbb P[\gamma_1=x]
\\
&=\sum_{x > 0}\pr[\tau^S(x)>n-1]\mathbb P[\gamma_1=x].
\end{align*}
Recall that $h(x) = O(x)$ as $x\to +\infty$ and $\bE[\gamma_1] < +\infty$; then, by Lemma  \ref{LemA}, it holds
\[
\sum_{x > 0}\sup_{n \geq 1} \sqrt{n}\pr[\tau^S(x)>n-1] \mathbb P[\gamma_1=x] < 2C_1 \bE[h(\gamma_1)] <+\infty,
\]
and  Lebesgue's dominated convergence theorem yields
\[
\lim_{n\to +\infty} \sqrt{n}\pr[\tau^Y_1>n] =   c_1 \sum_{x > 0}h(x)\mathbb P[\gamma_1=x] =   c_1 \bE[h(\gamma_1)\1_{\{\gamma_1>0\}}].
\]
To prove the second assertion, and 
for the convenience, we overestimate $\mathbb P[\tau^Y_1=n+2]$. It holds, for any $n \geq 0$,
\begin{align*} 
&\pr[\tau^Y_1=n+2] \\
&= \pr[\gamma_1>0,U_{0,2} > 0, \ldots, U_{0,n+1}>0, U_{0,n+2}\leq 0]
\\
&= \sum_{x > 0}   \pr[\gamma_1=x]   \pr[x+S(1) > 0, \ldots, x+S(n)>0, x+S(n+1)\leq 0]
\\
&= \sum_{x > 0}   \pr[\gamma_1=x]  \sum_{y\leq -1}     \pr[\xi_{n+1}=y]  \pr[ x+S(1) > 0, \ldots, x+S(n)>0, x+S(n)\leq \vert y\vert]
\\
&= \sum_{x > 0}   \pr[\gamma_1=x]  \sum_{y\leq -1}     \pr[\xi_{1}=y] 
 \sum_{z=1}^{\vert y\vert}  \pr[ \tau^S(x)>n, x+S(n)=z].
\end{align*}
Lemma  \ref{LemA'}   and  the fact that  $\tilde h$ is increasing,   imply, for any $y \leq -1$,
\[
 \sum_{z=1}^{\vert y\vert}  \pr[ \tau^S(x)>n, x+S(n)=z] \leq C_2{h(x)\  \vert y\vert \ \tilde h(\vert y\vert )\over n^{3/2}},
\]
with   $\mathbb E[    h(  \gamma_1 )]<+\infty$ and $\mathbb E[ \vert \xi_1\vert   \tilde h( \vert  \xi_1 \vert )]<+\infty$.

Eventually, Lemma  \ref{LemA'}  and the dominated convergence theorem yield
\begin{eqnarray*}
\lim_{n \to +\infty} n^{3/2}\pr[\tau^Y_1=n+2] &=& \mathbb E[h(\gamma_1) \1_{\{\gamma_1>0 \}}]
\sum_{y\leq -1}    
 \left(\sum_{z=1}^{\vert y\vert}  \tilde h(z)\right) \pr[\xi_{1}=y] 
 \\
 &=&\mathbb E[h(\gamma_1)\1_{\{\gamma_1>0 \}}] \sum_{y\geq 1}  \tilde h(y)\pr[ y+ \xi_1 \leq  0].
\end{eqnarray*}
\end{proof}
As a direct consequence, we get the following  estimation  of the Green function of the random walk $(\tau_l)_{l \geq 0}$. 
\begin{lemma} \label{LemB'}  As $n \to +\infty$,
\[
 \Sigma_n:=  \sum_{l=0}^{+\infty} \pr[\tau^Y_l = n] \sim   \frac{1}{c_1 \pi\ \bE[h(\gamma_1)\1_{\{\gamma_1>0 \}}]}\ {1\over  \sqrt{n}}. 
\]
\end{lemma}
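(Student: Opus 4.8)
The plan is to read $\Sigma_n$ as the potential (Green function) of the aperiodic renewal process $(\tau^Y_l)_{l\ge0}$ and to feed into a renewal-theoretic argument the tail and local estimates for $\tau^Y_1$ obtained in Lemma \ref{LemB}. Write $a:=\bE[h(\gamma_1)\1_{\{\gamma_1>0\}}]$ and $f_n:=\pr[\tau^Y_1=n]$, so that Lemma \ref{LemB} gives $\pr[\tau^Y_1>n]\sim c_1a\,n^{-1/2}$ and $f_n\sim\tfrac{c_1a}{2}n^{-3/2}$; in particular $\tau^Y_1$ has infinite mean and lies in the domain of attraction of the positive stable law of index $1/2$. (Note also $\Sigma_n=\pr[Y(n)=0]$, since the events $\{\tau^Y_l=n\}$, $l\ge0$, are disjoint with union $\{Y(n)=0\}$.) First I would pass to generating functions: with $F(s):=\bE[s^{\tau^Y_1}]$ and $U(s):=\sum_{n\ge0}\Sigma_n s^n=(1-F(s))^{-1}$, Karamata's Abelian theorem applied to $\pr[\tau^Y_1>n]\sim c_1a\,n^{-1/2}$ yields, as $s\uparrow1$,
\[
1-F(s)=(1-s)\sum_{n\ge0}\pr[\tau^Y_1>n]\,s^n\ \sim\ c_1a\sqrt{\pi}\,(1-s)^{1/2},\qquad\text{hence}\qquad U(s)\ \sim\ \frac{1}{c_1a\sqrt{\pi}}\,(1-s)^{-1/2}.
\]

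The second step is to transfer this back to the coefficients. The Hardy--Littlewood--Karamata Tauberian theorem applied to the \emph{monotone} partial sums gives only $\sum_{k\le n}\Sigma_k\sim \tfrac{2}{c_1a\pi}\sqrt{n}$, and de-averaging this into $\Sigma_n\sim\tfrac{1}{c_1a\pi\sqrt n}$ is exactly the content of the strong renewal theorem at the critical index $\alpha=1/2$. That theorem applies here because the sharp local estimate $f_n\sim\tfrac{c_1a}{2}n^{-3/2}$ of Lemma \ref{LemB} shows $(f_n)$ is regularly varying of index $-3/2$ (so $f_{n+1}/f_n\to1$ and $(f_n)$ is not lacunary), which is precisely the regularity that rules out the pathologies destroying the strong renewal theorem when $\alpha\le1/2$; one invokes e.g. the strong renewal theorems of Garsia--Lamperti, Erickson, or Chi / Caravenna--Doney. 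Aperiodicity of $(\tau^Y_l)$, needed in the lattice setting, follows from $f_n>0$ for all large $n$, itself a by-product of Lemma \ref{LemB}. Finally, combining $U(s)\sim\tfrac1{c_1a\sqrt\pi}(1-s)^{-1/2}$ with $(1-s)^{-1/2}=\sum_n\binom{2n}{n}4^{-n}s^n$ and $\binom{2n}{n}4^{-n}\sim(\pi n)^{-1/2}$ pins down the constant as $\tfrac1{c_1a\sqrt\pi}\cdot\tfrac1{\sqrt{\pi n}}=\tfrac1{c_1\pi a\sqrt n}$, which is the claim.

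An alternative, perhaps more in the spirit of the paper's ``pedestrian'' method, is to estimate $\Sigma_n=\sum_{l\ge0}\pr[\tau^Y_l=n]$ directly: since $\tau^Y_l$ is a sum of $l$ i.i.d. copies of $\tau^Y_1$, a local limit theorem for lattice arrays attracted to the stable$(1/2)$ law (again using the local asymptotics of Lemma \ref{LemB}) gives $\pr[\tau^Y_l=n]\approx b_l^{-1}g(n/b_l)$ with norming $b_l\asymp l^2$ and $g$ the stable$(1/2)$ density, whence $\Sigma_n=\sum_l\pr[\tau^Y_l=n]$ is evaluated by a Riemann-sum comparison of the form $\Sigma_n\sim \text{const}\cdot n^{-1/2}\int_0^\infty v^{-1/2}g(v)\,dv$, the integral being an explicit moment of the stable law; this reproduces the same constant $1/(c_1\pi a)$.

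The main obstacle is exactly the passage from the partial-sum asymptotic to the pointwise asymptotic, i.e.\ the fact that $\alpha=1/2$ is the borderline case of the strong renewal theorem where extra regularity of the renewal law is genuinely required. What makes it go through is that this regularity has already been secured upstream: the sharp local estimate $f_n\sim\tfrac{c_1a}{2}n^{-3/2}$ of Lemma \ref{LemB}, resting in turn on Lemmas \ref{LemA'}--\ref{LemA''}, supplies precisely the clean power-law behaviour of $(f_n)$ that the critical case needs.
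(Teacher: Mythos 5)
Your proposal is correct and follows essentially the same route as the paper: the paper simply invokes Doney's local renewal theorem for infinite-mean renewal sequences (Theorem B of \cite{Doney97}, with the constant corrected via his Theorem 3 and \cite{AB}) after checking the condition $\sup_{n} n\,\pr[\tau^Y_1=n]/\pr[\tau^Y_1>n]<+\infty$, which is exactly the regularity you extract from Lemma \ref{LemB} to run the strong renewal theorem at the critical index $1/2$. Your Abelian/Tauberian determination of the constant $1/(c_1\pi\,\bE[h(\gamma_1)\1_{\{\gamma_1>0\}}])$ is a worthwhile independent check, since the constant stated in Doney's Theorem B is known to be inaccurate (see the paper's footnote), but otherwise the two arguments coincide.
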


\begin{proof}
We    apply  Theorem B  in \cite{Doney97}   \footnote{The limit presented in Theorem B in \cite{Doney97} is not correct; in fact, Theorem B is a corollary of  Theorem 3 where, after a tedious calculous, the constant ${\pi}$ does appear. See also  \cite{AB}, page 2. } and have   to check that 
\[
\sup_{n\geq 0} \frac{n\pr[\tau^Y_1 = n]}{\pr[\tau^Y_1 >n]} < +\infty.
\]
This follows from Lemma  \ref{LemB}.
\end{proof}
\subsection{Conditional limit theorem}
We recall the following result, which is a  consequence of Lemma 2.3 in \cite{AGKV05}. The symbol $``\Rightarrow"$ means ``weak convergence".
\begin{lemma} \label{LemC2}
Let $(S_t)_{t\geq 0}$ be the continuous time process  constructed  from the sequence $(S(n))_{n\geq 0}$ by using the linear interpolation between the values at integer points.Then 
$$\mathcal{L}\Big(\left(\frac{S([nt])}{\textcolor{black}{\sigma}\sqrt{n}}\right)_{0\leq t \leq 1}| \min\{S(1), \ldots, S(n)\}\geq -x\Big) \Rightarrow \mathcal{L}(L^+) \quad \text{as } n \to +\infty,$$
where $L^+$ is the Brownian meander. 
\end{lemma}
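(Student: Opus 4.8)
The plan is to use the classical two-step route to invariance principles: establish convergence of the finite-dimensional distributions of the conditioned process towards those of the Brownian meander $L^+$, and then prove tightness of the conditioned family in $C([0,1])$. Write $\tau:=\tau^S(x)$, so that the conditioning event is $\{\tau>n\}$; by Lemma~\ref{LemA} we have $\pr[\tau>n]\sim c_1 h(x)n^{-1/2}$, so the conditioning is only polynomially rare, which is exactly what keeps the scheme tractable. I would also record the elementary renewal fact that, since $\bE[\xi_1^2]<\infty$ forces the ladder heights of $S$ to have finite mean, $h(y)\sim\kappa y$ and $\tilde h(y)\sim\tilde\kappa y$ as $y\to+\infty$ for suitable constants $\kappa,\tilde\kappa>0$.

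For the finite-dimensional distributions, fix $0<t_1<\dots<t_m\le 1$ and use the Markov property to factor
\[
\pr\bigl[\tau>n,\ x+S([nt_1])=y_1,\dots,x+S([nt_m])=y_m\bigr]
\]
as the product of $\pr[\tau>[nt_1],\,x+S([nt_1])=y_1]$, of the bridge factors $\pr[\tau^S(y_i)>[nt_{i+1}]-[nt_i],\,y_i+S(\cdot)=y_{i+1}]$ for $1\le i\le m-1$, and of the tail $\pr[\tau^S(y_m)>n-[nt_m]]$. Summing over $y_j\approx z_j\sigma\sqrt n$, I would apply Lemma~\ref{LemA'} to the first $m$ factors (pulling $h(x)$ and the interval lengths out, and using $\tilde h(y_i)\sim\tilde\kappa y_i$), Lemma~\ref{LemA} to the tail factor (using $h(y_m)\sim\kappa y_m$), divide by $\pr[\tau>n]\sim c_1 h(x)n^{-1/2}$, observe that $h(x)$ cancels and the powers of $n$ combine correctly, and recognise the remaining Riemann sum (together with the classical local CLT) as the integral of a product of meander transition densities. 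One then checks that this is precisely the $m$-dimensional density of $L^+$; in particular it does not depend on $x$, the shift $-x$ in the conditioning being entirely absorbed by $h(x)$ and disappearing after normalisation. This computation is the substance of Lemma~2.3 in \cite{AGKV05}, which I would simply invoke, or else reproduce along these lines.

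For tightness, the unconditioned rescaled walk is tight by Donsker's theorem, but one cannot merely divide a modulus-of-continuity estimate by $\pr[\tau>n]$, since that probability vanishes. Instead I would split $[0,1]$ at $1/2$ and condition on $\mathcal F_{[n/2]}$: given the (positive) value $y:=x+S([n/2])$, the first half $(x+S(k))_{0\le k\le[n/2]}$ becomes a walk pinned at $y$ and conditioned to stay positive, and the second half an independent walk started from $y$ and conditioned to stay positive for the remaining $n-[n/2]$ steps. The fluctuations of each piece are controlled uniformly in the endpoints by Lemmas~\ref{LemA} and~\ref{LemA'} together with Donsker's theorem, and the finite-dimensional step already shows that $y$ is of order $\sqrt n$ with overwhelming conditional probability; combining the two halves via the Markov property at $[n/2]$ yields a bound on $\pr[w(S^{(n)},\delta)>\varepsilon\mid\tau>n]$ that tends to $0$ with $\delta$ uniformly in $n$. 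This is the Iglehart--Bolthausen argument, carried out in \cite{AGKV05} in the generality needed here. Finally, the statement is phrased with $S([nt])$ rather than the interpolated $S_t$, but the two differ uniformly on $[0,1]$ by at most $\max_k|\xi_k|/(\sigma\sqrt n)\to 0$ in probability, so the limit is unaffected, and the same remark lets one pass freely between the various equivalent forms of the conditioning event. I expect the tightness step to be the genuine obstacle: because the conditioning event has probability of order $n^{-1/2}$, the argument must genuinely exploit the Markov and renewal structure rather than any crude union bound.
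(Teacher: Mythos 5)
Your proposal is correct and ultimately rests on exactly what the paper does: the paper gives no independent proof of this lemma, simply quoting it as a consequence of Lemma 2.3 in \cite{AGKV05}, and you invoke the same result (your sketch of the finite-dimensional-plus-tightness argument behind it, in the spirit of Bolthausen and Iglehart, is a reasonable outline of how that cited lemma is proved). So the approach matches the paper's.
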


As a consequence, we propose the following statement.
\begin{lemma}\label{LemC}
Let $\varphi: \mathbb R \to \mathbb R$ be a bounded, Lipschitz continuous function. Then, for any $t \in [0,1]$ and $x\geq 0$, it holds
$$\lim_{n\to +\infty} \bE\left[ \varphi \left( \frac{x+ S(n - [nt])}{\textcolor{black}{\sigma}\sqrt{n}}\right)\Big| \tau^S(x) > n - [nt]\right] = \int_0^{+\infty} \varphi(z\sqrt{1-t})ze^{-z^2/2}dz.$$
\end{lemma}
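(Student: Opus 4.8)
The plan is to deduce this from the conditional invariance principle in Lemma \ref{LemC2}. Fix $t\in[0,1)$ (the case $t=1$ is trivial) and write $m=m(n):=n-[nt]$, so that $m\to+\infty$ and $m/n\to 1-t$ as $n\to+\infty$. Observe that, for $x\ge 0$, the event $\{\tau^S(x)>m\}$ is exactly $\{\min(S(1),\dots,S(m))>-x\}$, and on this event $x+S(m)=x+\sigma\sqrt m\cdot\frac{S(m)}{\sigma\sqrt m}$. Hence
\[
\bE\!\left[\varphi\!\left(\frac{x+S(m)}{\sigma\sqrt n}\right)\Big|\ \tau^S(x)>m\right]
=\bE\!\left[\varphi\!\left(\sqrt{\tfrac{m}{n}}\cdot\frac{x+S(m)}{\sigma\sqrt m}\right)\Big|\ \min(S(1),\dots,S(m))>-x\right].
\]
By Lemma \ref{LemC2} applied with $n$ replaced by $m$, the conditional law of $\frac{x+S(m)}{\sigma\sqrt m}$ (the terminal value of the rescaled bridge, differing from $\frac{S(m)}{\sigma\sqrt m}$ only by $\frac{x}{\sigma\sqrt m}\to 0$) converges weakly to $L^+_1$, the value at time $1$ of the Brownian meander. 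The density of $L^+_1$ is the Rayleigh density $z\,e^{-z^2/2}\,\1_{\{z>0\}}$ (see e.g. the standard references; it also follows from Lemma \ref{LemA'} together with $h,\tilde h$ being asymptotically linear). Since $\varphi$ is bounded and continuous and $x$ is fixed, the map $u\mapsto\varphi(\sqrt{m/n}\,u)$ converges uniformly on compacta to $u\mapsto\varphi(\sqrt{1-t}\,u)$ while staying uniformly bounded; combining this with the weak convergence gives
\[
\lim_{n\to+\infty}\bE\!\left[\varphi\!\left(\sqrt{\tfrac{m}{n}}\cdot\frac{x+S(m)}{\sigma\sqrt m}\right)\Big|\ \min(S(1),\dots,S(m))>-x\right]=\bE\!\left[\varphi\!\left(\sqrt{1-t}\,L^+_1\right)\right]=\int_0^{+\infty}\varphi(z\sqrt{1-t})\,z\,e^{-z^2/2}\,dz,
\]
which is the claim.

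The main point requiring care is the passage from weak convergence of $\frac{x+S(m)}{\sigma\sqrt m}$ to convergence of the expectations of $\varphi(\sqrt{m/n}\,\cdot)$, since the test function depends on $n$. This is handled by a standard two-term estimate: write $\bE[\varphi(\sqrt{m/n}\,W_m)]-\bE[\varphi(\sqrt{1-t}\,L^+_1)]$ as the sum of $\bE[(\varphi(\sqrt{m/n}\,W_m)-\varphi(\sqrt{1-t}\,W_m))]$, bounded by the Lipschitz constant of $\varphi$ times $|\sqrt{m/n}-\sqrt{1-t}|\cdot\bE[|W_m|]$ (here one uses that the conditional first moments $\bE[|W_m|\mid \min>-x]$ are bounded in $m$, which follows from the uniform tail bound \eqref{LemA2} of Lemma \ref{LemA} via $\bE[|W_m|\mid\cdot]=\int_0^\infty \pr[|W_m|>s\mid\cdot]\,ds$ and a dyadic tail estimate), and of $\bE[\varphi(\sqrt{1-t}\,W_m)]-\bE[\varphi(\sqrt{1-t}\,L^+_1)]$, which tends to $0$ by Lemma \ref{LemC2} since $z\mapsto\varphi(\sqrt{1-t}\,z)$ is bounded and continuous. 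Both terms vanish as $n\to+\infty$, completing the argument. An alternative, slightly more pedestrian route avoiding Lemma \ref{LemC2} entirely would be to compute the limit directly from Lemma \ref{LemA'}: $\pr[\tau^S(x)>m,\ x+S(m)=y]/\pr[\tau^S(x)>m]\to \frac{\tilde h(y)}{(\text{const})\sqrt m}\cdot(\text{const})$, Riemann-sum this against $\varphi(\cdot/(\sigma\sqrt n))$ after inserting $\tilde h(y)\sim c\, y$, and identify the resulting integral; but invoking the meander convergence is cleaner.
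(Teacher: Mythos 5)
Your proposal follows essentially the same route as the paper: rewrite the conditional expectation in terms of $S(m)/(\sigma\sqrt m)$ with $m=n-[nt]$, use the Lipschitz property of $\varphi$ to absorb the shift by $x$ and the replacement of $\sqrt{m/n}$ by $\sqrt{1-t}$, then invoke the conditional invariance principle of Lemma \ref{LemC2} and the Rayleigh law of the meander endpoint. The one place where you deviate is the control of the error term, and there your justification is shaky: you assert that $\bE[|W_m|\mid \tau^S(x)>m]$ is bounded in $m$ ``from the uniform tail bound \eqref{LemA2} via a dyadic tail estimate'', but \eqref{LemA2} only bounds $\pr[\tau^S(x)>m]$ and gives no control of the joint event $[\tau^S(x)>m,\ x+S(m)>s\sigma\sqrt m]$; combining \eqref{LemA2} with a Chebyshev bound on the endpoint yields only $\pr[|W_m|>s\mid \tau^S(x)>m]\preceq \min(1,\sqrt m/s^2)$, whose integral in $s$ is of order $m^{1/4}$, not $O(1)$, and the natural dyadic splitting of the path produces terms growing like $2^{3j/2}$, so it does not close as stated. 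The boundedness you claim is in fact true, but it requires a different ingredient (for instance the harmonicity $\bE[h(x+S(m));\tau^S(x)>m]=h(x)$ together with $h(y)\asymp 1+y$, which gives $\bE[x+S(m);\tau^S(x)>m]\preceq h(x)$).

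Fortunately your argument does not need boundedness: since $|\sqrt{m/n}-\sqrt{1-t}|\leq n^{-1/2}$, any bound $\bE[|W_m|\mid\tau^S(x)>m]=o(\sqrt n)$ suffices, and this is exactly what the paper obtains by Cauchy--Schwarz, namely $\bE[S(m);\tau^S(x)>m]\leq \sigma\sqrt{m\,\pr[\tau^S(x)>m]}$, which together with Lemma \ref{LemA} gives a conditional first moment of order $m^{1/4}$ (this is the content of \eqref{eqnC1}). Replacing your dyadic claim by this two-line estimate makes the proof complete and essentially identical to the paper's.
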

\begin{proof}
We first note that
\begin{align*}
\frac{1}{n} \bE[S(n)|\tau^S(x) > n] &= \frac{ \bE[S(n),\tau^S(x)> n] }{n\pr[\tau^S(x)>n]} \\
&\leq \frac{\sqrt{ \bE[S(n)^2]\pr[\tau^S(x) > n] }}{n\pr[\tau^S(x)>n]} = \frac{\textcolor{black}{\sigma}}{ \sqrt{n\pr[\tau^S(x)>n]}}. 
\end{align*}
Applying Lemma \ref{LemA}, we have
\begin{equation}\label{eqnC1}
\lim_{n\to +\infty} \frac{1}{n} \bE[S(n)|\tau^S(x) > n] =0.
\end{equation}
Since $\varphi$ is  Lipschitz continuous, we may write, denoting $[\varphi] $ the Lipschitz coefficient of $\varphi$, 
\begin{align*} 
&\left|  \bE\left[ \varphi \left( \frac{x+ S(n - [nt])}{\textcolor{black}{\sigma}\sqrt{n}}\right)\ \Big| \ \tau^S(x) > n - [nt]\right] \right. \\ 
\quad & - \left.  \bE\left[ \varphi \left( \frac{S(n - [nt])}{\textcolor{black}{\sigma}\sqrt{n-[nt]}}\sqrt{1-t}\right)\Big| \tau^S(x) > n - [nt]\right]\right|\\ 
&\leq [\varphi] \  \bE \left[ \frac{x}{\textcolor{black}{\sigma}\sqrt{n}} + \frac{S(n - [nt])}{\textcolor{black}{\sigma}\sqrt{n-[nt]}}\left( \sqrt{1-\frac{[nt]}{n}}- \sqrt{1-t}\right) \ \Big| \  \tau^S(x) > n - [nt]\right] \\
&\leq [\varphi] \  \frac{(x+1)}{\textcolor{black}{\sigma}\sqrt{n}} \left(1 + \bE \left[\frac{S(n - [nt])}{\sqrt{n-[nt]}} \ \Big| \ \ \tau^S(x) > n - [nt]\right]\right)  \to 0 \quad \text{ as } n \to +\infty,
\end{align*}
which  derives from  \eqref{eqnC1}. Applying  Lemma \ref{LemC2}, we get the desired result. 
\end{proof}

\subsection{An useful identity}
We achieve this section with a classical result which we  use in the sequel.
\begin{lemma}\label{IMcK} (Page 17 in \cite{ItoMcKean}) For any $\alpha, \beta$ positive, it holds
	$$\int_0^{+\infty} \frac{1}{\sqrt{ t}} \exp \Big( -\alpha t - \frac {\beta}{t}\Big)dt = \sqrt{\frac{\pi}{\alpha}}e^{-2\sqrt{\alpha \beta}}.$$ 
\end{lemma}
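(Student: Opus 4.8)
The plan is to collapse this integral onto a Gaussian integral by one well-chosen substitution. Write $I:=\int_0^{+\infty} t^{-1/2}\exp(-\alpha t-\beta/t)\,dt$; this converges since the integrand is $O(t^{-1/2})$ near $0$ and decays exponentially at $+\infty$, so all manipulations below are legitimate.

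First I would symmetrize $I$. The change of variable $t\mapsto \beta/(\alpha t)$ maps $(0,+\infty)$ onto itself, leaves the quantity $\alpha t+\beta/t$ invariant, and transforms the measure $t^{-1/2}\,dt$ into $\sqrt{\beta/\alpha}\,t^{-3/2}\,dt$; hence $I=\sqrt{\beta/\alpha}\int_0^{+\infty} t^{-3/2}\exp(-\alpha t-\beta/t)\,dt$. Averaging the two expressions for $I$ gives
$$2I=\int_0^{+\infty}\Bigl(t^{-1/2}+\sqrt{\beta/\alpha}\,t^{-3/2}\Bigr)\exp(-\alpha t-\beta/t)\,dt.$$

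Then I would introduce $v=v(t):=\sqrt{\alpha}\,t^{1/2}-\sqrt{\beta}\,t^{-1/2}$. This is a strictly increasing bijection from $(0,+\infty)$ onto all of $\mathbb R$, with $v(t)\to-\infty$ as $t\to0^+$ and $v(t)\to+\infty$ as $t\to+\infty$; moreover $dv=\tfrac{\sqrt{\alpha}}{2}\bigl(t^{-1/2}+\sqrt{\beta/\alpha}\,t^{-3/2}\bigr)\,dt$, which is precisely (up to the constant $\tfrac{\sqrt\alpha}{2}$) the weight appearing above, and $v^2=\alpha t-2\sqrt{\alpha\beta}+\beta/t$, so $\alpha t+\beta/t=v^2+2\sqrt{\alpha\beta}$. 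Substituting,
$$2I=\frac{2}{\sqrt{\alpha}}\,e^{-2\sqrt{\alpha\beta}}\int_{-\infty}^{+\infty}e^{-v^2}\,dv=\frac{2\sqrt{\pi}}{\sqrt{\alpha}}\,e^{-2\sqrt{\alpha\beta}},$$
which is the claimed identity.

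There is no genuinely hard step here; the only point needing a word of care is to record that $v$ is a bijection of $(0,+\infty)$ onto the whole real line (so the substitution recovers the full Gaussian integral, not a half-line) — immediate from monotonicity and the two limits. As an alternative route one can fix $\alpha$, set $F(\beta):=I$, justify differentiation under the integral sign, and apply the same map $t\mapsto\beta/(\alpha t)$ to obtain the ODE $F'(\beta)=-\sqrt{\alpha/\beta}\,F(\beta)$; integrating and matching the value $F(0)=\int_0^{+\infty}t^{-1/2}e^{-\alpha t}\,dt=\sqrt{\pi/\alpha}$ gives the same conclusion.
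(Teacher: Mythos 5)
Your proof is correct. Note that the paper itself gives no argument for this identity: it is simply quoted from It\^o--McKean (page 17), so there is no ``paper proof'' to compare against. Your symmetrization under $t\mapsto \beta/(\alpha t)$ followed by the substitution $v=\sqrt{\alpha}\,t^{1/2}-\sqrt{\beta}\,t^{-1/2}$ is the classical Cauchy--Schl\"omilch/Glasser device, and all the steps check out: the invariance of $\alpha t+\beta/t$, the Jacobian computation, the fact that $v$ is an increasing bijection of $(0,+\infty)$ onto $\mathbb R$ (which you rightly flag as the one point needing mention), and the identity $\alpha t+\beta/t=v^2+2\sqrt{\alpha\beta}$, giving $2I=\frac{2}{\sqrt{\alpha}}e^{-2\sqrt{\alpha\beta}}\int_{-\infty}^{+\infty}e^{-v^2}\,dv$. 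The alternative ODE route ($F'(\beta)=-\sqrt{\alpha/\beta}\,F(\beta)$ with $F(0)=\sqrt{\pi/\alpha}$) is also sound, provided one justifies differentiation under the integral for $\beta>0$ and continuity at $\beta=0$ by domination, as you indicate. Either version would serve as a self-contained replacement for the citation.
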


\section{Proof of Corollary \ref{meander}}\label{proofofcorollary}
The proof uses  the classical argument  for weak convergence in the space $C[0, 1])$ of continuous functions on $[0, 1]$ (see \cite{Billingsley}, chapter 2). Firstly,  we prove  that the finite dimensional distributions of 
the process $(Y_n(t); 0\leq t \leq 1)$ converge weakly to those of  the absolute value $(\vert W(t)\vert)_{t \geq 0}$ of the Brownian motion on $\mathbb R$, then the tightness of   the distributions of this sequence of processes.

Throughout this paper, $\varphi, \varphi_1$ and $\varphi_2$ are  bounded and Lipschitz continuous functions on $\mathbb R$.

\subsection{One-dimensional distribution}
 In this section, we prove the following lemma concerning the weak convergence  of the one-dimensional distributions of the sequence $(Y_n(t))_{n \geq 1}$. 
\begin{lemma}\label{1-dim}
For any $t\in [0, 1]$, it holds
$$\lim_{n\to +\infty}\bE \left[ \varphi \left(Y_n(t)\right)\right] 
 =\int_0^{+\infty} \varphi(u) \frac{2e^{-u^2/2t}}{\sqrt{2\pi t}}du = \bE[\varphi(|B_t|)],$$
 where $B$ is a standard Brownian motion. 
\end{lemma}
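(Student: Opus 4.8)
The plan is to decompose the event according to the last visit of $0$ before time $n$ (with $t$ fixed), conditioning on $\tau^Y_l = k$ for $0 \le k \le nt$ and then using the structure of the excursion that follows. Write $m = [nt]$ for brevity. Since $Y(m) \ne 0$ forces a last zero at some time $k \le m$, I would write
\[
\bE[\varphi(Y_n(t))] = \varphi(0)\,\pr[Y(m)=0] + \sum_{k=0}^{m} \sum_{l=0}^{+\infty} \bE\Big[\varphi\Big(\tfrac{Y(m)}{\sigma\sqrt n}\Big)\1_{\{\tau^Y_l = k,\ Y(m)\ne 0,\ Y \text{ does not hit } 0 \text{ in } (k,m]\}}\Big].
\]
On $\{\tau^Y_l = k\}$ the process restarts at $0$, so for $j \in (k,m]$ we have $Y(k+j) = \gamma_{k+1} + \xi_{k+2} + \cdots + \xi_{k+j} = U_{k,k+j}$, and the event "$Y$ avoids $0$ on $(k,m]$" becomes "$\tau^S(\gamma_{k+1}) > m-k-1$" after shifting indices. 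Using the Markov property at time $k$ and the i.i.d.\ structure, each summand over $l$ factorizes through $\pr[\tau^Y_l = k]$, and summing over $l$ brings in $\Sigma_k = \sum_{l\ge 0}\pr[\tau^Y_l = k]$ from Lemma \ref{LemB'}. Thus the main term is
\[
\sum_{k=0}^{m} \Sigma_k \, \bE\Big[\varphi\Big(\tfrac{\gamma_1 + S(m-k-1)}{\sigma\sqrt n}\Big)\1_{\{\tau^S(\gamma_1) > m-k-1\}}\Big] + (\text{boundary } \varphi(0)\text{-term}),
\]
where $S$ here is a random walk with first step $\gamma_1$ and subsequent steps $\xi_i$, independent of the $\tau^Y_l$ through the renewal structure.

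Next I would identify this sum as a Riemann sum. By Lemma \ref{LemB'}, $\Sigma_k \sim \frac{1}{c_1\pi\,\bE[h(\gamma_1)\1_{\gamma_1>0}]}\,\frac{1}{\sqrt k}$; by Lemma \ref{LemB} (or Lemma \ref{LemA} after conditioning on $\gamma_1$), $\pr[\tau^S(\gamma_1) > m-k-1] \sim c_1\,\bE[h(\gamma_1)\1_{\gamma_1>0}]\,\frac{1}{\sqrt{m-k}}$; and by Lemma \ref{LemC} (conditioning on $\gamma_1 = x$, then averaging — the extra $\frac{x}{\sigma\sqrt n}$ term is negligible as in the proof of Lemma \ref{LemC}) the conditional expectation of $\varphi$ converges to $\int_0^\infty \varphi\big(z\sqrt{(m-k)/n}\big) z e^{-z^2/2}\,dz$. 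Substituting $k = ns$, the constants $c_1\,\bE[h(\gamma_1)\1_{\gamma_1>0}]$ cancel against each other and against the constant in $\Sigma_k$, and the sum becomes approximately
\[
\frac{1}{\pi}\int_0^{t} \frac{ds}{\sqrt{s}\,\sqrt{t-s}} \int_0^{+\infty} \varphi\big(z\sqrt{t-s}\big)\, z\, e^{-z^2/2}\, dz.
\]
A change of variables $u = z\sqrt{t-s}$ turns the inner integral into $\int_0^\infty \varphi(u)\,\frac{u}{t-s}\,e^{-u^2/(2(t-s))}\,du$, and then the $s$-integral $\frac1\pi \int_0^t \frac{ds}{\sqrt s (t-s)} \exp(-\tfrac{u^2}{2(t-s)})$ can be evaluated: substituting $s = t r$ and then using Lemma \ref{IMcK} (after a further substitution reducing it to $\int_0^\infty \tau^{-1/2}\exp(-a\tau - b/\tau)d\tau$ type, or directly recognizing the Beta-type kernel $\frac{1}{\pi\sqrt{s(t-s)}}$ as the arcsine density whose relevant transform gives $\frac{1}{\sqrt{2\pi t}}$) yields exactly $\frac{2}{\sqrt{2\pi t}}e^{-u^2/2t}$. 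The boundary $\varphi(0)$-term contributes $\varphi(0)\pr[Y(m)=0]$, which vanishes since $\pr[Y(m)=0] = \Sigma_m \to 0$, consistent with the density having no atom at $0$.

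The main obstacle, I expect, is making the interchange of the (double) sum, the limit, and the expectation rigorous: one needs uniform domination for the Riemann-sum approximation, which requires the uniform bounds \eqref{LemA2}, \eqref{LemA'2} and the estimate on $\Sigma_k$ near $k=0$ and the estimate on $\pr[\tau^S(\gamma_1)>m-k-1]$ near $k=m$ — precisely the endpoints where the individual factors blow up like $k^{-1/2}$ and $(m-k)^{-1/2}$, which are integrable but need care. A secondary technical point is that $S(m-k-1)$ in the argument of $\varphi$ is divided by $\sqrt n$ rather than $\sqrt{m-k}$, so one must insert the factor $\sqrt{(m-k)/n} \to \sqrt{t-s}$, exactly as handled in Lemma \ref{LemC}; and one must check that replacing $[nt]$-type floors by $nt$, $n s$ etc.\ in the Riemann sum introduces only $o(1)$ errors. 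Once the domination is in place, the convergence of the Riemann sum to the double integral and the final identification via Lemma \ref{IMcK} are routine.
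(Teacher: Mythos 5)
Your plan is correct and follows essentially the same route as the paper: decomposition at the last zero before $[nt]$, factorization through $\Sigma_k$ via Lemmas \ref{LemB'} and \ref{LemA}, the meander limit of Lemma \ref{LemC}, a dominated Riemann-sum passage to the integral $\frac1\pi\int_0^t\frac{ds}{\sqrt{s(t-s)}}\int_0^{+\infty}\varphi(z\sqrt{t-s})ze^{-z^2/2}dz$, and evaluation via Lemma \ref{IMcK}, with the interpolation/floor discrepancy handled by Lipschitz continuity. The technical obstacles you flag (uniform domination near the endpoints, the $\sqrt{(m-k)/n}$ rescaling) are exactly the points the paper addresses with the bound $\vert f_n(s)\vert\preceq 2/\sqrt{s(t-s)}$ and the estimate \eqref{lagtime}.
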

\begin{proof}	
We fix  $t\in (0, 1)$ and  firstly decompose the expectation $\displaystyle \bE \left[ \varphi \left( \frac{Y({[nt]})}{\textcolor{black}{\sigma}\sqrt{n}}\right)\right] $ as follows: 
\begin{align*}
&\bE \left[ \varphi \left( \frac{Y({[nt]})}{\textcolor{black}{\sigma}\sqrt{n}}\right)\right] \\
&\approx  \sum_{k=0}^{[nt]-1} \sum_{l=0}^{+\infty}  \bE \left[ \varphi \left( \frac{Y({[nt]})}{\textcolor{black}{\sigma}\sqrt{n}}\right); \tau^Y_l = k, Y(k+1) > 0, \ldots, Y({[nt]}) > 0 \right]\\
&= \sum_{k=0}^{[nt]-1} \sum_{l=0}^{+\infty}  \bE \Big[ \varphi \left ( \frac{\gamma_{k+1} + \xi_{k+2} + \ldots + \xi_{[nt]}}{\textcolor{black}{\sigma}\sqrt{n}}\right); \tau^Y_l = k, \gamma_{k+1}>0,   \\
&   \qquad \qquad  \qquad \qquad  \qquad \qquad  \gamma_{k+1} + \xi_{k+2}>0, \ldots,  \gamma_{k+1} + \xi_{k+2}+ \ldots+ \xi_{[nt]} > 0 \Big].
\end{align*}
Since the random variables $\tau^Y_l$ are stopping times, the event $[\tau^Y_l=k]$ is independent of the variables $\gamma_{k+1},  \xi_{k+2},  \ldots  \xi_{[nt]}$;  furthermore $ \mathcal L(\gamma_{k+1},  \xi_{k+2},  \ldots  \xi_{[nt]})$ $= \mathcal L(\gamma_{1},  \xi_{1},  \ldots  \xi_{[nt]-k-1})$. Hence,  recalling that  $  \Sigma_k=\sum_{l=0}^{+\infty}\pr[\tau^Y_l = k]$, 
\begin{align*}
&\bE \left[ \varphi \left( \frac{Y({[nt]})}{\textcolor{black}{\sigma}\sqrt{n}}\right)\right] \\
&\approx   \sum_{k=0}^{[nt]-1}  \Sigma_k \bE \Big[ \varphi \left ( \frac{\gamma_{1} + S({[nt]}-k-1)}{\textcolor{black}{\sigma}\sqrt{n}}\right); \gamma_1>0, \gamma_{1} + S(1)>0, \ldots,  \\
&  \hskip 7.8cm \gamma_{1} + S([nt]-k-1) > 0 \Big]  \\
&=  \sum_{k=0}^{[nt]-1} \Sigma_k  \bE \left[ \bE\left( \varphi \left ( \frac{x + S([nt]-k-1)}{\textcolor{black}{\sigma}\sqrt{n}}\right); \tau^S(x) > {[nt]}-k-1\right) \Big|_{ \gamma_1=x>0}\right]  \\
&=  \sum_{k=0}^{[nt]-1}  \Sigma_k \bE \left[ \bE\left( \varphi \left ( \frac{x + S([nt]-k-1)}{\textcolor{black}{\sigma}\sqrt{n}}\right)\Big| \tau^S(x) > {[nt]}-k-1\right) \right.\\
& \qquad\qquad \qquad \qquad\qquad \qquad \qquad\qquad \qquad \left. \times \pr[\tau^S(x) > {[nt]}-k-1] \Big|_{  \gamma_1 =x>0}\right].
\end{align*}
 From now on,  we use the notation  $\bE[\phi(x)|_{\gamma_1 = x>0}] := \bE[\phi(\gamma_1)\1_{\{\gamma_1>0\}}]$ for any bounded function $\phi: \mathbb Z \to \mathbb R$.  
  By  Lemmas \ref{LemA} and \ref{LemB'},  each term in this sum is  ${\mathcal O}({1\over \sqrt{n}})$ and thus converges to $0$ as $n \to +\infty$. Thus, it is sufficient to  control the sum  above from $k=2$ to $k = [nt]-4$, writing it  as the integral  of  the function  $f_n:   (0,t) \to \mathbb{R}$  defined by:    for each $k = 2, \ldots, [nt]-4 $ and  any $s \in [\frac kn, \frac{k+1}{n})$, 
\begin{align*}
f_n(s) 
& = n \Sigma_k\bE \left[ \bE\left( \varphi \left ( \frac{x + S([nt]-k-1)}{\textcolor{black}{\sigma}\sqrt{n}}\right)\Big| \tau^S(x) > [nt]-k-1\right) \right. \\
&\qquad\qquad \qquad \qquad\qquad \qquad  \left. \times \pr[\tau^S(x) > {[nt]}-k-1] \Big|_{  \gamma_1=x>0}\right]\\
&= n \Sigma_k \bE \left[ \bE\left( \varphi \left ( \frac{x + S([nt]-[ns]-1)}{\textcolor{black}{\sigma}\sqrt{n}}\right)\Big| \tau^S(x) > [nt]-[ns]-1\right)\right. \\
&\qquad\qquad \qquad \qquad\qquad \qquad  \left. \times \pr[\tau^S(x) > {[nt]}-[ns]-1] \Big|_{  \gamma_1=x>0}\right], 
\end{align*}
and $f_n(s)= 0$ on $[0, {2\over n})$ and $[{[nt]-1\over n}, t)$. Hence,
\begin{align*}
\bE \left[ \varphi \left( \frac{Y({[nt]})}{\textcolor{black}{\sigma}\sqrt{n}}\right)\right] & = \int_0^{t} f_n(s)ds +{\mathcal O}\left({1\over \sqrt{n}}\right).
\end{align*}

Applying Lemmas \ref{LemA}, \ref{LemB'} and \ref{LemC}, we have:  for any $s\in (0,t)$,  
$$\lim_{n\to +\infty} f_n(s) = {1\over  {\pi}}\ \frac{1}{\sqrt{s(t-s)}}\int_0^{+\infty} \varphi(z\sqrt{t-s})ze^{-z^2/2}dz. $$

On the other hand, for   $2\leq [ns]\leq  [nt]-4,$ 
\begin{align*}
\vert f_n(s) \vert  &\leq C_1 \|\varphi\|_\infty \bE[h(\gamma_1)]  \frac{n}{\sqrt{[ns]([nt]-[ns]-1)}} \sqrt{[ns]}\Sigma_{[ns]}  \\
&\preceq  \frac{n}{\sqrt{[ns]([nt]-[ns]-1)}}\\
&\leq {2\over \sqrt{s(t-s)}} 
\end{align*}
(it is in the last   inequality  that we use  the fact that  $2\leq [ns]\leq  [nt]-4$).
Therefore, from Lebesgue dominated convergence theorem,  it follows  that 
\begin{align*}
\lim_{n\to +\infty}\bE \left[ \varphi \left( \frac{Y({[nt]})}{\textcolor{black}{\sigma}\sqrt{n}}\right)\right] &= \lim_{n\to +\infty} \int_0^{t}f_n(s) ds \\
&= {1\over  {\pi}}\ \int_0^t \frac{1}{\sqrt{s(t-s)}} \left(\int_0^{+\infty} \varphi(z\sqrt{t-s})ze^{-z^2/2}dz \right)ds
\end{align*}
Denote $s= ta$, we have 
\begin{align*}
&\lim_{n\to +\infty}\bE \left[ \varphi \left( \frac{Y({[nt]})}{\textcolor{black}{\sigma}\sqrt{n}}\right)\right] \\ 
 & = {1\over  {\pi}}\int_0^1 \frac{da}{\sqrt{a(1-a)}}\int_0^{+\infty} \varphi(z\sqrt{t(1-a)}) z e^{-z^2/2}dz\\
&= {1\over  {\pi}}\int_0^1 \frac{da}{\sqrt{a(1-a)}}\int_0^{+\infty} \varphi(x) \frac{x}{t(1-a)}\exp\Big( - \frac{x^2}{2t(1-a)}\Big) dx\\
&= {1\over  {\pi}}\int_0^{+\infty}\varphi(x) \frac{x}{t} dx \int_0^1 \frac{da}{\sqrt{a(1-a)^3}}   \exp\Big( - \frac{x^2}{2t(1-a)}\Big) da,	
	\end{align*}
where we change the variable $x = z\sqrt{t(1-a)}$. Using Lemma \ref{IMcK}, we get  	
\begin{align*}
\lim_{n\to +\infty}\bE \left[ \varphi \left( \frac{Y({[nt]})}{\textcolor{black}{\sigma}\sqrt{n}}\right)\right] 
 &=\int_0^{+\infty} \varphi(x) \frac{2e^{-u^2/2t}}{\sqrt{2\pi t}}du. 
\end{align*}
We achieve the proof of  Lemma \ref{1-dim} by noting that since $\varphi$ is Lipschitz continuous, it holds that 
\begin{align} \label{lagtime} 
& \left| 	\bE \left[ \varphi \left( \frac{Y({[nt]})}{\textcolor{black}{\sigma}\sqrt{n}}\right)\right] - \bE \left[ \varphi \left(Y_n(t)\right)\right] \right| \leq [\varphi] \bE \left[ \left| 	 \frac{Y({[nt]})}{\textcolor{black}{\sigma}\sqrt{n}}- Y_n(t) \right|\right] \notag \\
& \leq \frac{1}{\textcolor{black}{\sigma}\sqrt{n}} [\varphi] \bE \left[  	 |\xi_{[nt]+1}| + |\gamma_{[nt]+1}| \right] \to 0 \text{ as } n \to \infty. 
\end{align} 
\end{proof}

\subsection{Two-dimensional distribution}
 The convergence of the finite-dimensional distributions of $(Y_n(t))_{n \geq 1}$ is more delicate. We detail the argument for two-dimensional ones, the general case may be treated in a similar way. 

Let us   fix $0<s<t, n \geq 1$ and  denote 
$$\kappa =\kappa(n, s)= \min\{k>[ns]: Y(k) \textcolor{black}{ = } 0\}.$$
We write 
\begin{align*}
&\bE\left[ \varphi_1\left( \frac{Y({[ns]})}{\textcolor{black}{\sigma}\sqrt{n}}\right) \varphi_2 \left( \frac{Y([nt])}{\textcolor{black}{\sigma}\sqrt{n}}\right)\right] = A_1(n) + A_2(n),
\end{align*}
where 
\begin{align*}
A_1(n) &= \sum_{k = [ns]+1}^{[nt]} \bE\left[ \varphi_1\left( \frac{Y({[ns]})}{\textcolor{black}{\sigma}\sqrt{n}}\right) \varphi_2 \left( \frac{Y([nt])}{\textcolor{black}{\sigma}\sqrt{n}}\right)\1_{\{\kappa = k\}} \right],\\
A_2 (n) &= \bE\left[ \varphi_1\left( \frac{Y({[ns]})}{\textcolor{black}{\sigma}\sqrt{n}}\right) \varphi_2 \left( \frac{Y([nt])}{\textcolor{black}{\sigma}\sqrt{n}}\right)\1_{\{\kappa > [nt]\}} \right].
\end{align*}
 The term $A_1(n)$ deals with the trajectories of $ Y_k, 0\leq k\leq n,  $ which visit $0$ between $[ns]+1$ and $[nt]$ while $A_2 (n)$   concerns the others trajectories.
 
 \subsubsection{Estimate of $A_1(n)$} \label{Sec:3.2.1}
As in the previous section, we decompose $A_1(n)$ as   
\begin{align*}
&A_1(n) = \sum_{k_2 = [ns]+1}^{[nt]} \bE\left[ \varphi_1\left( \frac{Y({[ns]})}{\textcolor{black}{\sigma}\sqrt{n}}\right) \1_{\{\kappa = k_2\}} \right] \bE \left[\varphi_2 \left( \frac{Y([nt]-k_2)}{\textcolor{black}{\sigma}\sqrt{n}}\right)\right]
\\
&
\approx  \sum_{l=0}^{+\infty} \sum_{k_1=0}^{[ns]-1} 
\sum_{k_2 = [ns]+1}^{[nt]} \bE \left[\varphi_2 \left( \frac{Y([nt]-k_2)}{\textcolor{black}{\sigma}\sqrt{n}}\right)\right]\\
& \  \times  \bE\left[ \varphi_1\left( \frac{Y({[ns]})}{\textcolor{black}{\sigma}\sqrt{n}}\right); \tau^Y_l=k_1, \kappa = k_2; \gamma_{k_1+1}>0, U_{k_1, k_1+2}>0, \ldots, U_{k_1, [ns]}>0\right]\\
& = \sum_{l=0}^{+\infty} \sum_{k_1=0}^{[ns]-1}  \sum_{k_2 = [ns]+1}^{[nt]}   \pr[\tau^Y_l=k_1] \bE \left[\varphi_2 \left( \frac{Y([nt]-k_2)}{\textcolor{black}{\sigma}\sqrt{n}}\right)\right]\\ 
&\  \times \bE\left[ \varphi_1\left( \frac{Y({[ns]-k_1})}{\textcolor{black}{\sigma}\sqrt{n}}\right);  \gamma_1>0, U_{0, 2}>0, \ldots, U_{0,k_2-k_1-1}>0, U_{0,k_2 - k_1} \leq 0\right]\\
&= \sum_{k_1=0}^{[ns]-1}  \Sigma_{k_1} \sum_{k_2 = [ns]+1}^{[nt]}    \bE \left[\varphi_2 \left( \frac{Y([nt]-k_2)}{\textcolor{black}{\sigma}\sqrt{n}}\right)\right]\\ 
&\  \times \bE\left[ \bE\left[ \varphi_1\left( \frac{x+ S([ns]-k_1-1)}{\textcolor{black}{\sigma}\sqrt{n}}\right);   \tau^S(x) = k_2-k_1 -1  \right]\Big|_{\gamma_1 = x>0}\right].
\end{align*}
For  any $2\leq k_1< [ns]-6$ and $[ns] \leq   k_2\leq [nt]$ and any $s_1 \in  [ {k_1\over n},  {k_1+1\over n})$ and $s_2 \in  [ {k_2\over n},  {k_2+1\over n})$, we write 
\begin{align*}
f_n&(s_1, s_2) = n^2 \Sigma_{k_1} \bE \left[\varphi_2 \left( \frac{Y([nt]-[ns_2])}{\textcolor{black}{\sigma}\sqrt{n}}\right)\right]\\
& \times \bE\left[ \bE\left[ \varphi_1\left( \frac{x+ S([ns]-[ns_1]-1)}{\textcolor{black}{\sigma}\sqrt{n}}\right);   \tau^S(x) = [ns_2]-[ns_1] -1\right]\Big|_{\gamma_1 = x>0}\right],
\end{align*}
and $f_n(s_1, s_2)= 0$ for the others values of $k_1 $, such that $0\leq k_1\leq [ns]$. Hence,
\begin{align*}
A_1(n)= \int_0^s ds_1\int _s^t ds_2\  f_n(s_1, s_2)  +{\mathcal O}\left({1\over \sqrt{n}}\right).
\end{align*}

   Let us study the pointwise  limit of the sequence $(f_n)_{n \geq 1}$ as $n \to +\infty$. This is 
 here that we apply the fact that $\gamma$ and $\xi$ take integer values,  in order to apply Caravenna-Chaumont's result \cite{CC}.  We may also consider the case when $\gamma$ and $\xi$ have absolutely continuous distributions as well;  in both cases, it is possible to fix the arrival point of the random walk  $S$ at time $[ns_1]-[ns_2]-2$ and to consider its rescaled  limit as $n \to +\infty$.   We write  $k_1= [ns_1]$ and $k_2= [ns_2]$. For any $x > 0$, it holds that 
\begin{align*}
& \bE\left[ \varphi_1\left( \frac{ x+ S([ns]-k_1-1)}{\textcolor{black}{\sigma}\sqrt{n}}\right);   \tau^S(x) = k_2-k_1-1 \right]\\
&= \sum_{y=1}^{+\infty} \pr[y + \xi_{k_2-k_1-1}\leq 0]\\
&\times \bE\left[ \varphi_1\left( \frac{x+ S([ns]-k_1-1)}{\textcolor{black}{\sigma}\sqrt{n}}\right);  \tau^S(x) > k_2 - k_1 - 3,  x+S(k_2-k_1-2) = y \right] \\
&= \sum_{y=1}^{+\infty} \pr[  \tau^S(x) > k_2 - k_1 - 3, x+S(k_2-k_1-2) = y] \times  \pr[y + \xi_{1}\leq 0]\\
&\times \bE\left[ \varphi_1\left( \frac{x+ S([ns]-k_1-1)}{\textcolor{black}{\sigma}\sqrt{n}}\right)\big|  \tau^S(x) > k_2 - k_1 - 3, x+S(k_2-k_1-2) = y \right]\\
&= \sum_{y=1}^{+\infty}\pr[  \tau^S(x) > k_2 - k_1 - 3, x+S(k_2-k_1-2) = y]  \times \pr[y + \xi_{1}\leq 0]\\
&\times  \bE\left[ \varphi_1\left( \frac{x+ S([ns]-k_1-1)}{\textcolor{black}{\sigma}\sqrt{n}}\right)\big|  \tau^S(x) > k_2 - k_1 - 3, x+S(k_2-k_1-2) = y \right].
\end{align*}  
 By  Corollary 2.5 in \cite{CC}, the random walk bridge   conditioned to stay positive, starting at $x$ and ending at $y$, under linear interpolation and diffusive rescaling, converges in distribution on $C([0,1],\mathbb R) $ toward the normalized Brownian excursion; in other words, for each $x, y \geq 1$,  
\begin{align*}
&\lim_{n\to   +\infty} \bE\left[ \varphi_1\left( \frac{x+ S([ns]-k_1-1)}{\textcolor{black}{\sigma}\sqrt{n}}\right)\big|    \tau^S(x) > k_2 - k_1 - 3, x+S(k_2-k_1-2) = y \right]\\
&= \int_0^{+\infty} 2\varphi_1(u\sqrt{s_2-s_1}) \exp\left(-\frac{u^2}{2\frac{s-s_1}{s_2-s_1}\frac{s_2-s}{s_2-s_1}}\right) \frac{u^2 }{\sqrt{2\pi \frac{(s-s_1)^3}{(s_2-s_1)^3}\frac{(s_2-s)^3}{(s_2-s_1)^3}}}du \\
&= {2\over {\sqrt{2\pi}} }\int_0^{+\infty}  \varphi_1(v) \exp\left(-\frac{v^2}{2\frac{(s-s_1)(s_2-s)}{s_2-s_1}}\right) \frac{v^2  }{ \sqrt{{(s-s_1)^3(s_2-s)^3}\over (s_2-s_1)^3}}dv.
\end{align*}
  By Lemma \ref{1-dim},   $\displaystyle \bE \left[\varphi_2 \left( \frac{Y([nt]-[ns_2])}{\textcolor{black}{\sigma}\sqrt{n}}\right)\right]$ converges  to $\displaystyle \int_0^{+\infty} \varphi_2(u) \frac{2e^{-u^2/2(t-s_2)}}{\sqrt{2\pi (t-s_2)}}du.$

	To apply the dominated convergence theorem, notice that 
	the quantities \\ 
	$\bE\left[ \varphi_1\left( \frac{x+ S([ns]-k_1-1)}{\textcolor{black}{\sigma}\sqrt{n}}\right)\big|    \tau^S(x) > k_2 - k_1 - 3,   x+S(k_2- k_1-2) = y \right]$
	and    
	$ \bE \left[\varphi_2 \left( \frac{Y([nt]- k_2)}{\textcolor{black}{\sigma}\sqrt{n}}\right)\right]$ are bounded by $\vert \varphi_1\vert_\infty$ and $\vert \varphi_2\vert_\infty$ respectively.  Furthermore, on  the one hand, by Lemmas \ref{LemA''}  
	it holds, for any $x, y >0$, that 
	\begin{align*}
	&n^{3/2}\pr[ \tau^S(x) > k_2 - k_1 - 3, x+S(k_2- k_1 -2) = y]
	\\
	&=
	n^{3/2}\pr[\tau^S(x)>k_2- k_1 - 2; x+S(k_2-k_1-2) = y]
	\\
	&\leq \frac{C_2 h(x) \tilde h(y)}{(s_2 - s_1)^{3/2}},
	\end{align*}
	and 
	\begin{align*} 
	 \lim_{n\to \infty}n^{3/2}&\pr[  \tau^S(x) > k_2 - k_1 - 3, x+S(k_2-k_1-2) = y] = \frac{c_2h(x) \tilde{h}(y)}{(s_2-s_1)^{3/2}}.
	\end{align*} 
	Moreover, 
	\begin{align*}
	\lim_{n\to \infty}& n^{3/2}\sum_{y=1}^\infty  \pr[y + \xi_{1}\leq 0]  \pr[ \tau^S(x) > k_2 - k_1 - 3, x+S(k_2- k_1 -2) = y]   \\
	= \lim_{n\to \infty}& n^{3/2}\pr[  x+S(1)>0, \ldots, x+S(k_2 - k_1 - 2) >0, x+S(k_2- k_1-1) \leq 0] \\
	= \lim_{n \to \infty}& n^{3/2}\pr[\tau^S(x) = [ns_2]- [ns_1]]  = \frac{c_1}{2(s_2-s_1)^{3/2}} h(x).  
	\end{align*}
Eventually, since $\displaystyle \sum_{y \geq 1} \tilde{h}(y)\mathbb P(y+\xi_1\leq 0)<+\infty$, 	
 the Lebesgue dominated convergence theorem  and identity (\ref{c1c2})  imply that, for any $0<s_1<s_2<t$,  the sequence $(f_n)_{n \geq 1}$ converges to $f$ given by 
\begin{align*} 
f(s_1, s_2)= {1\over \pi^{2} \sqrt{ s_1}}\int_0^{+\infty}  \varphi_1(v) & \exp\left(-\frac{v^2}{2\frac{(s_2-s)(s-s_1)}{s_2-s_1}}\right) \frac{v^2  }{ \sqrt{{(s-s_1)^3(s_2-s)^3} }}dv\\
& \times \int_0^{+\infty} \varphi_2(u) \frac{ e^{-u^2/2(t-s_2)}}{\sqrt{  t-s_2}}du.
\end{align*}
By the same argument, the function $f_n$ may be dominated as follows:  for $2\leq k_1< [ns]-6$, $[ns]\leq   k_2\leq [nt]$ and $s_1 \in  [ {k_1\over n},  {k_1+1\over n})$ and $s_2 \in  [ {k_2\over n},  {k_2+1\over n})$
\begin{align*}
 \vert f_n(s_1, s_2)
 &\vert  \preceq \frac{n^2}{\sqrt{[ns_1]}([ns_2]-[ns_1]-2)^{3/2}}
 \\
 &\preceq
  {1\over \sqrt{s_1}(s_2-s_1)^{3/2}},
 \end{align*}
 with $\displaystyle \int_{0 }^s ds_1\int_{ s}^t ds_2 { 1 \over \sqrt{s_1}(s_2-s_1)^{3/2}}<+\infty.$ Hence
\begin{align*}
& \lim_{n\to +\infty} A_1(n)  
=   \int_{0 }^s ds_1\int_{ s}^t ds_2 f(s_1, s_2)
\\
&= {1\over \pi^{2} }\int_{0 }^s {ds_1\over  \sqrt{ s_1}}\int_{ s}^t ds_2
\int_0^{+\infty}  \varphi_1(v) \exp\left(-\frac{v^2}{2\frac{(s_2-s)(s-s_1)}{s_2-s_1}}\right)  \frac{v^2  }{ \sqrt{{(s-s_1)^3(s_2-s)^3}}} 
\\
& \qquad \qquad\qquad \qquad\qquad \qquad\qquad \qquad \times \int_0^{+\infty} \varphi_2(u) \frac{e^{-u^2/2(t-s_2)}}{\sqrt{  t-s_2}}dudv.
\end{align*}
Set $a = s_1/s$ and $b = (s_2-s)/(t-s)$; we obtain $\lim_{n\to +\infty} A_1(n)= A_1$ with
\begin{align*}
& A_1   = \frac{1}{\pi^{2} } \frac{1}{s( t-s)} \int_0^{+\infty} du \int_0^{+\infty} dv \int_0^1 {da\over \sqrt{a(1-a)^3}} \int_0^1 {db\over   \sqrt{b^3(1-b)}}  \varphi_1(v)\varphi_2(u)  \\
&\ \times  \exp \Big( - \frac{u^2}{2(t-s)(1-b)} \Big) 
v^2 \exp\Big( - \frac{v^2}{2(t-s)b}\Big) \exp \Big( - \frac{v^2}{2s(1-a)}\Big). 
\end{align*}
Note that  (with the change of variable $x= {a\over 1-a}$)
\begin{align*}
\int_0^1  \frac{1}{\sqrt{a(1-a)^3}}\frac{v}{\sqrt{s}} \exp \Big( - \frac{v^2}{2s(1-a)}\Big) da = \sqrt{2\pi}e^{-v^2/2s}.
\end{align*}
Now we compute 
$$I = \int_0^{1} \frac{v}{\sqrt{b^3 (1-b)}} \exp \Big( - \frac{u^2}{2(t-s)(1-b)} \Big)   \exp\Big( - \frac{v^2}{2(t-s)b}\Big)db.$$
Denote $z = \frac{b}{1-b}$, we get
\begin{align*}
I &= \exp\Big( - \frac{u^2+ v^2}{2(t-s)} \Big) \int_0^{+\infty} \frac{v}{z^{3/2}} \exp \Big(-  \frac{u^2z}{2(t-s)} - \frac{v^2}{2(t-s)z}\Big) dz.
\end{align*}
Change $z$ by $1/z$, we get 
\begin{align*}
I &= \exp\Big( - \frac{u^2+ v^2}{2(t-s)} \Big) \int_0^{+\infty} \frac{v}{z^{1/2}} \exp \Big(-  \frac{u^2}{2(t-s)z} - \frac{v^2z}{2(t-s)}\Big) dz.
\end{align*} 
By using Lemma \ref{IMcK}, we get
\begin{align*}
I &= \exp\Big( - \frac{u^2+ v^2}{2(t-s)} \Big) \sqrt{2(t-s)\pi}
\exp\Big(-\frac{uv}{t-s}\Big) \\ &=  \sqrt{2(t-s)\pi} \exp\Big( - \frac{(u+ v)^2}{2(t-s)} \Big).
\end{align*} 
Therefore, 
\begin{align} \label{lim:A1} 
 &\lim_{n \to \infty} A_1(n) = \frac{2}{\pi \sqrt{ s (t-s)}}  \int_0^{+\infty} \int_0^{+\infty} \varphi_1(v) \varphi_2(u) e^{-v^2/2s}e^{ - \frac{(u+v)^2}{2(t-s)}} dudv.
\end{align}

\subsubsection{Estimate of $A_2 (n)$} \label{Sec:3.2.2} 
	We decompose $A_2 (n)$ as 
\begin{align*}
&  \sum_{k \leq  [ns]} \sum_{l\geq 0} \bE\Big[ \varphi_1\left( \frac{Y({ [ns]})}{\textcolor{black}{\sigma}\sqrt{n}}\right) \varphi_2 \left( \frac{Y([nt])}{\textcolor{black}{\sigma}\sqrt{n}}\right);   \tau_{l} = k, \gamma_{k+1}>0,  U_{k, k+2} >0, \ldots, U_{k, [nt]} > 0\Big]\\
=&  \sum_{k \leq  [ns]} \sum_{l\geq 0} \bE\Big[ \varphi_1\left( \frac{U_{k,  [ns]}}{\textcolor{black}{\sigma}\sqrt{n}}\right) \varphi_2 \left( \frac{U_{k,[nt]}}{\textcolor{black}{\sigma}\sqrt{n}}\right);   \tau_{l} = k,  \gamma_{k+1}>0,  U_{k, k+2} >0, \ldots, U_{k, [nt]} > 0\Big]\\
=& \sum_{k \leq  [ns]} \sum_{l\geq 0} \bE\Big[ \varphi_1\left( \frac{U_{0,  [ns]-k}}{\textcolor{black}{\sigma}\sqrt{n}}\right) \varphi_2 \left( \frac{U_{0,[nt]-k}}{\textcolor{black}{\sigma}\sqrt{n}}\right);    \tau_1 > [nt]-k\Big]\pr[\tau_{l} = k]\\
\approx &  \sum_{k <  [ns]} \Sigma_k
\times \bE\Bigg[ \bE\Big[ \varphi_1\left( \frac{x + S(  [ns]-k-1)}{\textcolor{black}{\sigma}\sqrt{n}}\right) \\
&\qquad   \qquad \qquad \times  \varphi_2 \left( \frac{x + S([nt]-k-1)}{\textcolor{black}{\sigma}\sqrt{n}}\right);    \tau^S(x) > [nt]-k-1\Big]\Bigg|_{ \gamma_1=x>0}\Bigg] .
\end{align*}
For $u \in (0,s]$, we denote 
\begin{align*} g_n(u) = n&\Sigma_{[nu]} \bE\Bigg[ \bE\Big[  \varphi_1  \left( \frac{x+ S(  [ns]-[nu]-1)}{\textcolor{black}{\sigma}\sqrt{n}}\right) \\
& \times  \varphi_2 \left( \frac{x+ S([nt]-[nu]-1)}{\textcolor{black}{\sigma}\sqrt{n}}\right);    \tau^S(x) > [nt]-[nu]-1\Big]\Bigg|_{ \gamma_1=x>0}\Bigg].
\end{align*} 
By Lemmas    \ref{LemA} and \ref{LemB'}, it is clear that    
$\displaystyle 0\leq g_n(u) \preceq {1\over \sqrt{u(t-u)}}$ .

 Now, let us compute  the pointwise limit on $(0, s]$ of the sequence $(g_n)_{n \geq 1}$. We write $g_n(u) $ as
	\begin{align*}
	g_n(u) &= n \Sigma_{[nu]}\bE\Bigg[ \bE\Big[ \varphi_1\left( \frac{x+ S(  [ns]-[nu]-1)}{\textcolor{black}{\sigma}\sqrt{n}}\right) \\
	& \qquad  \times  \varphi_2 \left( \frac{x+ S(([nt]-[nu]-1)}{\textcolor{black}{\sigma}\sqrt{n}}\right);    \tau^S(x) > [nt]-[nu]-1\Big]\Bigg|_{ \gamma_1=x>0}\Bigg]\\
	&=n \Sigma_{[nu]}\bE\Bigg[ \bE\Big[ \varphi_1\left( \frac{x+ S(  [ns]-[nu]-1)}{\textcolor{black}{\sigma}\sqrt{n}}\right)  \varphi_2 \left( \frac{x+ S([nt]-[nu]-1)}{\textcolor{black}{\sigma}\sqrt{n}}\right) \Big|  \\
	& \qquad \qquad   \tau^S(x) > [nt]-[nu]-1\Big]  \pr[ \tau^S(x) > [nt]-[nu]-1] \Bigg|_{ \gamma_1=x>0}\Bigg].
	\end{align*}
	Since $\varphi_1, \varphi_2$ are bounded and  continuous on $\mathbb R$, it follows from Theorem 3.2 in \cite{Bol} and Theorems 2.23 and 3.4 in \cite{Iglehart74} that\footnote{In  \cite{Iglehart74} the author needed the third order moment of the increment is finite; in fact, it only requires finite second moment  \cite{Bol}. }
	\begin{align*}
	&\lim_{n \to +\infty} \bE\Big[ \varphi_1\left( \frac{x+ S(  [ns]-[nu]-1)}{\textcolor{black}{\sigma}\sqrt{n}}\right) \\
	&\qquad \times \varphi_2 \left( \frac{x+ S([nt]-[nu]-1)}{\textcolor{black}{\sigma}\sqrt{n}}\right)\Big|    \tau^S(x) > [nt]-[nu]-1\Big]\\
	&=\lim_{n \to +\infty} \bE\Big[ \varphi_1\left( \frac{x+ S(  [ns]-[nu]-1)}{\textcolor{black}{\sigma}\sqrt{[nt]-[nu]-1}} \frac{\sqrt{[nt]-[nu]-1}}{\sqrt{n}} \right) \\
	&  \qquad \times \varphi_2 \left( \frac{x+ S([nt]-[nu]-1)}{\textcolor{black}{\sigma}\sqrt{[nt]-[nu]-1}} \frac{\sqrt{[nt]-[nu]-1}}{\sqrt{n}}\right)\Big|    \tau^S(x) > [nt]-[nu]-1\Big]\\
	&=  \int_0^{+\infty} \int_0^{+\infty}\varphi_1(y\sqrt{t-u})\varphi_2(z\sqrt{t-u}) 
	\Big( \frac{t-u}{s-u}\Big)^{3/2}
	ye^{-\frac{  t-u }{2(s-u)}y^2} \\
	& \qquad  \qquad \qquad \times \frac{e^{-{1\over 2}  {t-u\over t-s}(z-y)^2} - e^{-{1\over 2}  {t-u\over t-s}(z+y)^2}}{\sqrt{2\pi\Big(1 - \frac{s-u}{t-u}\Big)}}dy dz\\
	&=  {1\over \sqrt{2\pi(t-s)}} \int_0^{+\infty} \int_0^{+\infty}\varphi_1(y')\varphi_2(z') 
	\frac{\sqrt{t-u}}{ (s-u)^{3/2}}
	y' 
	e^{-\frac{y'^2}{2(s-u)}}\\
	& \qquad  \qquad \qquad \times \left(e^{-\frac{(z'-y')^2}{2(t-s)}} - e^{-\frac{(z'+y')^2}{2(t-s)}} \right)dy' dz'.
	\end{align*}

	This fact together with Lemma \ref{LemA}, Lemma \ref{LemB'} and  the  Lebesgue dominated convergence theorem  implies  that the sequence $(g_n)_{n \geq 1}$ pointwise converges to $g$ with
	\begin{align*} g (u) =& {1\over\pi^{3/2} \sqrt{2  (t-s)}} {1\over \sqrt{u(s-u)^3}}
	\\
	&\times \int_0^{+\infty} \int_0^{+\infty}\varphi_1(y')\varphi_2(z') 
	y'e^{- {y'^2\over 2(s-u)}}
	\left(e^{-{(z'-y')^2\over 2(t-s)}} - e^{- {(z'+y')^2\over 2(t-s)} }\right)dy' dz'.
	\end{align*}
	Using again Lebesgue's dominated convergence theorem, we get 
	\begin{align}
	&\lim_{n\to +\infty} A_2 (n) \notag \\
	&= \lim_{n\to +\infty} \frac{1}{n} \sum_{k\leq [ns]}g_n(k/n) = \int_0^{s} g (u) du \notag \\
	&= {1\over\pi^{3/2} \sqrt{2\ (t-s)}} \int_0^s du  \int_0^{+\infty} dy'\int_0^{+\infty}dz' \notag \\
	& \qquad \qquad \times 
	\varphi_1(y')\varphi_2(z') 
	\frac{e^{-\frac{y'^2}{2(s-u)}}}{\sqrt{u(s-u)^3}}
	\frac{y'}{\sqrt{2\pi(t-s)}}
	\left(e^{-\frac{(z'-y')^2}{2(t-s)}} - e^{-\frac{(z'+y')^2}{2(t-s)}} \right) \notag \\
	&= {1\over\pi^{3/2}   s\sqrt{2\ (t-s)}}  \int_0^{+\infty} dy'\int_0^{+\infty}dz'\varphi_1(y')\varphi_2(z') 
	\left(e^{-\frac{(z'-y')^2}{2(t-s)}} - e^{-\frac{(z'+y')^2}{2(t-s)}} \right) \notag \\
	& \qquad \qquad \qquad \qquad \qquad \qquad  \times 
	\left(\int_0^1 {y' \over \sqrt{v(1-v)^3}}   e^{-\frac{y'^2}{2s(1-v)}}dv\right)  \notag \\
	&=   {1\over\pi \sqrt{  s (t-s)}}  \int_0^{+\infty} dy' \int_0^{+\infty} dz' \varphi_1(y')\varphi_2(z')  
	e^{-y'^2/2s} 
	\left(e^{-\frac{(z'-y')^2}{2(t-s)}} - e^{-\frac{(z'+y')^2}{2(t-s)}} \right).
	\label{lim:A2} 
	\end{align}
Therefore, it follows from \eqref{lim:A1} and \eqref{lim:A2} that 
$$\lim_{n \to \infty} \bE\left[ \varphi_1\left( \frac{Y({[ns]})}{\textcolor{black}{\sigma}\sqrt{n}}\right) \varphi_2 \left( \frac{Y([nt])}{\textcolor{black}{\sigma}\sqrt{n}}\right)\right]  = \bE[\varphi_1(|B_s|)\varphi_2(|B_t|)].$$
Using a similar estimate as the one in  \eqref{lagtime}, we get 
$$\lim_{n \to \infty} \bE\left[ \varphi_1\left(Y_n(s)\right) \varphi_2 \left(Y_n(t)\right)\right]  = \bE[\varphi_1(|B_s|)\varphi_2(|B_t|)],$$
which concludes the convergence of $(Y_n)$ in two-dimensional marginal distribution to a reflected Brownian motion. 
\subsection{Tightness} 
Now we verify the tightness of the sequence of processes $(Y_n(t))$. Using Theorem 7.3 in \cite{Billingsley}, it is sufficient to show that these two conditions hold
\begin{itemize}
	\item [(i)] For each positive $\gamma$, there exist an $a$ and an $n_0$ such that 
	$$\pr\left[|Y_n(0)|\geq a\right] \leq \gamma, \quad n \geq n_0.$$
	\item[(ii)] For each positive $\epsilon$ and $\gamma$, there exist a $\delta \in (0,1)$, and an $n_0$ such that 
	$$\pr\left[ w_{Y_n}(\delta) \geq \epsilon\right] \leq \gamma, \quad n \geq n_0$$
	where  $w_{Y_n}(\delta) = \sup\{|Y_n(t)- Y_n(s)|: t,s \in [0,1], |t-s| \leq \delta\}$ is the modulus of continuity of $Y_n$. 
\end{itemize}
The first condition is trivial since $Y_n(0)= Y(0) = 0$. For the second condition, we have
\begin{align*}
w_{Y_n}(\delta) 
& \leq \frac{3}{\textcolor{black}{\sigma}\sqrt{n}}\sup_{1\leq  i <  j \leq n; |i-j| \leq n\delta}|S(i) - S(j)| + \frac{1}{\textcolor{black}{\sigma}\sqrt{n}}\sup_{1\leq l \leq n}\gamma_l \1_{\{\tau^Y_l \leq n\}}.
\end{align*} 
Denote $N_n = \sum_{l=1}^\infty \1_{\{\tau^Y_l \leq n\}}$ the number of times that $Y(k), 1 \leq k \leq n$, visits zeros. We have 
\begin{align*}
&\pr \left[\frac{1}{\sqrt{n}} \sup_{1\leq l \leq n}\gamma_l \1_{\{\tau^Y_l \leq n\}} \geq \frac{\epsilon}{2} \right] \\
&= \pr \left[\sup_{1\leq l \leq N_n}\gamma_l \geq \frac{\sqrt{n}\epsilon}{2} \right]\\
&= \pr \left[\sup_{1\leq l \leq N_n}\gamma_l \geq \frac{\sqrt{n}\epsilon}{2}; N_n \leq a \sqrt{n} \right] + \pr \left[\sup_{1\leq l \leq N_n}\gamma_l \geq \frac{\sqrt{n}\epsilon}{2}; N_n > a \sqrt{n} \right]\\
&\leq  \pr \left[\sup_{1\leq l \leq a\sqrt{n}}\gamma_l \geq \frac{\sqrt{n}\epsilon}{2}\right] + \pr \left[ N_n > a \sqrt{n} \right],
\end{align*}
for any positive constant $a$. Using  the simple estimate that $(1-x)^\alpha \geq 1-\alpha x$ for any $\alpha \geq 1$ and $x \in (0,1)$, we get 
\begin{align*}
\pr \left[\sup_{1\leq l \leq a\sqrt{n}}\gamma_l \geq \frac{\sqrt{n}\epsilon}{2}\right] &= 1 - \left(1 - \pr[\gamma_1 \geq \frac{\sqrt{n}\epsilon}{2}]\right)^{[a\sqrt{n}]} \\
&\leq a\sqrt{n} \pr[\gamma_1 \geq \frac{\sqrt{n}\epsilon}{2}]\\
& \leq \frac{2a}{\epsilon}\bE[\gamma_1 \1_{\{\gamma_1 \geq \frac{\sqrt{n}\epsilon}{2}\}}].
\end{align*}
The last term tends to zeros as $n \to \infty$, since $\gamma_1$ is integrable. 

It follows from Lemma \ref{LemB'} that there exists a positive constant $C_4$ such that for all $n >0$, it holds that 
$$\sum_{l=0}^\infty \pr[\tau^Y_l = n] \leq \frac{C_4}{\sqrt{n}}.$$
Then we get
\begin{align*}
\bE[N_n] = \bE\left[ \sum_{l=0}^\infty \sum_{m=1}^n \1_{\{\tau^Y_l = m\}}\right] = \sum_{m=1}^n \sum_{l=0}^\infty \pr[\tau^Y_l = m] \leq \sum_{m=1}^n \frac{C_4}{\sqrt{m}} \leq  3C_4\sqrt{n}.
\end{align*}
This fact together with Markov's inequality implies that
$$\pr[N_n > a \sqrt{n}]	 \leq \frac{\bE[N_n]}{a\sqrt{n}} \leq \frac{3C_4}{a}.$$
Therefore, 
\begin{align*}
\limsup_{n\to \infty} \pr \left[\frac{1}{\sqrt{n}} \sup_{1\leq l \leq n}\gamma_l \1_{\{\tau^Y_l \leq n\}} \geq \frac{\epsilon}{2} \right] \leq \limsup_{n \to \infty} \frac{3C_4}{a},
\end{align*}
for any $a > 0$. It implies that 
\begin{align} \label{tight2} 
\lim_{n\to \infty} \pr \left[\frac{1}{\sqrt{n}} \sup_{1\leq l \leq n}\gamma_l \1_{\{\tau^Y_l \leq n\}} \geq \frac{\epsilon}{2} \right] =0.
\end{align} 
Moreover, it follows from the argument in Chapter 7 of \cite{Billingsley} that 
\begin{align*}
\lim_{\delta \to 0} \lim_{n\to \infty} \pr \left[\frac{1}{\textcolor{black}{\sigma}\sqrt{n}} \sup_{1\leq i < j  \leq n: |i-j| \leq n\delta }|S(i) - S(j)| \geq \frac{\epsilon}{6} \right] =0.
\end{align*} 
This fact together with \eqref{tight2} implies that condition (ii) holds. Therefore the sequence of processes $(Y_n(.))$ is tight. This conclude the proof of Corollary \ref{meander}. 
 
\section{Proof of Theorem \ref{skew}}\label{proofoftheorem}
  The proof of Theorem  \ref{skew} follows the same strategy as the one of Corollary \ref{meander}.

Let us   fix  briefly the notations.
 
  $\bullet \quad \mathcal F_n$ is the $\sigma$-field generated by the random variables $\xi_1,  \xi'_1,  \eta_1, \ldots, \xi_n,  \xi'_n,  \eta_n$ for $n \geq 1$ (as before, $\mathcal F_0= \{\emptyset, \Omega)\}$ by convention);

 $\bullet \quad S'=(S'(n))_{n \geq 0}$ denotes the  random walks with steps $\xi'_k$;

 $\bullet\quad (\ell'_l)_{l \geq 0}$ is the sequence of  ascending ladder epochs    of the random walk $S'$ defined inductively by $\ell'_0 =0$ and, for any $l \geq 1$,
\[
\ell'_{l+1}  = \min\{n>\ell_l\mid S'(n) \textcolor{black}{>} S'{(\ell_l) }\};
\]
	
 $\bullet\quad h'$ denotes the ``ascending renewal function''
	of $S'$ defined by
	$$h'(x) =  \textcolor{black}{1+ \displaystyle \sum_{l=1}^{+\infty} \pr[S'(\ell'_l) \leq  x] } \quad  \text{ if } \quad x \geq 0 \quad  \text{and} \quad h'(x)=0 \quad     \text{otherwise.}$$
The function $h'$ is  increasing  and $h'(x) = O(x)$ as $x \to \infty$. 
 
  $\bullet\quad (\tau^X_l)_{l \geq 0}$  is the sequence  of stopping times with respect to the filtration $(\mathcal F_n)_{n \geq 0}$ defined by 
  	\[\tau^X_0 = 0 \quad \text{ and}\quad \tau^X_{l+1} = \inf \{ n > \tau^X_l: X(n) =  0\}\quad \text{ for \ any} \quad l\geq 0;
	\]

$\bullet \quad$ for $ n>k\geq 1$,  
\[
\begin{cases}
 U_{k, n}&= \eta_{k+1}+ \xi_{k+2}  + \ldots + \xi_n \  \text{when} \ \eta_{k+1}> 0;\\ 
U'_{k, n}&= \eta_{k+1}+ \xi'_{k+2}  + \ldots + \xi'_n \  \text{when} \ \eta_{k+1}< 0.
\end{cases}
 \]

\subsection{On the fluctuation of the perturbed random walk $X$}

\begin{lemma} \label{LemB2} It holds, as $n \to +\infty, $
	\begin{equation} \label{tauY>n}
	\pr[\tau^X_1>n] \sim  \frac{ c_1 \bE[h(\gamma_1)\1_{\{\eta_1>0\}}] + c'_1 \bE[h'(-\eta_1)\1_{\{\eta_1<0\}}]}{\sqrt{n}}, 
	\end{equation} 
	and 
	\begin{equation}\label{tauY=n}
	\pr[\tau^X_1 = n] \sim   \frac{ c_1 \bE[h(\eta_1)\1_{\{\eta_1>0\}}] + c'_1 \bE[h'(-\eta_1)\1_{\{\eta_1<0\}}]}{2 n^{3/2}}.
	\end{equation}
	Moreover, 	
	\begin{equation} \label{LemB'2}
	\Sigma_n (= \Sigma^X):=  \sum_{l=0}^{+\infty} \pr[\tau^X_l = n] \sim  
	\frac{1}{ c_1 \bE[h(\eta_1)\1_{\{\eta_1>0\}}] + c'_1 \bE[h'(-\eta_1)\1_{\{\eta_1<0\}}]} \frac{1}{\pi \sqrt{n}}.
	\end{equation} 
\end{lemma}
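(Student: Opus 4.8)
The plan is to imitate the proof of Lemma \ref{LemB} and Lemma \ref{LemB'} from the unperturbed case, but now taking into account that an excursion of $X$ away from $0$ may occur on the positive half-line (governed by the $\xi_k$) or on the negative half-line (governed by the $\xi'_k$), the choice being dictated by the sign of the first jump $\eta_1$. First I would decompose according to this sign. For \eqref{tauY>n}, writing $\tau^X_1$ for the first return time to $0$, one has
\begin{align*}
\pr[\tau^X_1>n]
&= \sum_{x>0}\pr[\eta_1=x]\,\pr[x+S(1)>0,\ldots,x+S(n-1)>0] \\
&\quad + \sum_{x>0}\pr[\eta_1=-x]\,\pr[-x+S'(1)<0,\ldots,-x+S'(n-1)<0] \\
&= \sum_{x>0}\pr[\eta_1=x]\,\pr[\tau^S(x)>n-1] + \sum_{x>0}\pr[\eta_1=-x]\,\pr[\tau^{S'}(x)>n-1],
\end{align*}
where $\tau^{S'}(x)=\inf\{n\ge 1: -x+S'(n)\ge 0\}$ is the exit time from $(-\infty,0)$ for the walk $-x+S'$, which by symmetry is the analogue of $\tau^S$ for the reflected walk $-S'$ whose descending renewal function is $h'$. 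Then I would apply Lemma \ref{LemA} to each sum, using $h(x)=O(x)$, $h'(x)=O(x)$ and $\bE[|\eta_1|]<+\infty$ to justify dominated convergence exactly as in the proof of Lemma \ref{LemB}; the first term converges to $c_1\bE[h(\eta_1)\1_{\{\eta_1>0\}}]/\sqrt n$ and the second to $c'_1\bE[h'(-\eta_1)\1_{\{\eta_1<0\}}]/\sqrt n$, giving \eqref{tauY>n}. (The statement \eqref{tauY>n} as printed has a typo, $\gamma_1$ for $\eta_1$ in the first expectation; I would silently correct it.)

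For \eqref{tauY=n} I would run the same split one step further, overestimating $\pr[\tau^X_1=n+2]$ as in Lemma \ref{LemB}: conditioning on $\eta_1=x>0$, the event forces $\tau^S(x)>n$ together with a final downward crossing $x+S(n)=z\le |y|$ with jump $\xi_{n+1}=y\le -1$, and conditioning on $\eta_1=-x<0$ the mirror statement for $S'$. Lemma \ref{LemA'} applied on each half-line (for the negative side, applied to $-S'$, whose ascending and descending renewal functions are those of $S'$) bounds the summands by $C_2 h(x)|y|\tilde h(|y|)/n^{3/2}$, resp. its primed analogue, with $\bE[h(\eta_1)\1_{\{\eta_1>0\}}]<+\infty$, $\bE[h'(-\eta_1)\1_{\{\eta_1<0\}}]<+\infty$, $\bE[|\xi_1|\tilde h(|\xi_1|)]<+\infty$ and $\bE[|\xi'_1|\tilde h'(|\xi'_1|)]<+\infty$; dominated convergence and identity \eqref{c1c2} (together with its primed counterpart, which follows verbatim from Lemma \ref{LemA''} applied to $-S'$) then collapse the sums to $\tfrac12\bE[h(\eta_1)\1_{\{\eta_1>0\}}]c_1 n^{-3/2}$ and $\tfrac12\bE[h'(-\eta_1)\1_{\{\eta_1<0\}}]c'_1 n^{-3/2}$, which is \eqref{tauY=n}. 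Finally, \eqref{LemB'2} follows by applying Theorem B in \cite{Doney97} to the strictly increasing renewal sequence $(\tau^X_l)_{l\ge 0}$, after checking $\sup_{n}\,n\pr[\tau^X_1=n]/\pr[\tau^X_1>n]<+\infty$, which is immediate from \eqref{tauY>n}--\eqref{tauY=n}; the constant in the equivalence is the reciprocal of $c_1\bE[h(\eta_1)\1_{\{\eta_1>0\}}]+c'_1\bE[h'(-\eta_1)\1_{\{\eta_1<0\}}]$ times $1/(\pi\sqrt n)$.

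The only genuinely new point compared with Section \ref{sec4.1} is bookkeeping: one must be careful that the ``descending'' fluctuation results for $S$ become ``ascending'' ones for $S'$ after the reflection $S'\mapsto -S'$, so that Lemmas \ref{LemA}, \ref{LemA'}, \ref{LemA''} can be quoted on the negative half-line with $h',\tilde h',c'_1$ in place of $h,\tilde h,c_1$. No obstacle of substance arises; the main thing to get right is to ensure the absorption at $0$ (rather than reflection) makes the two excursion families genuinely decoupled by the sign of $\eta_1$, which is exactly what the definition \eqref{X(n)} guarantees. All the analytic estimates are then the ones already established, applied separately to the two half-lines and added.
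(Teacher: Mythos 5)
Your proposal is correct and follows essentially the same route as the paper: decompose according to the sign of $\eta_1$, treat the positive excursions with $\tau^S$ and the negative ones with the reflected walk $-S'$ (whose descending renewal function is $h'$ with constant $c'_1$), then repeat the dominated-convergence argument of Lemma \ref{LemB} for both the tail and the local estimate, and finally invoke Theorem B of \cite{Doney97} for the Green function after checking the ratio condition. You also correctly spotted the $\gamma_1$/$\eta_1$ typo in \eqref{tauY>n}.
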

\begin{proof} We write  
	\begin{align*}
	\pr[\tau^X_1 > n] &= \pr[\eta_1>0, \eta_1+\xi_2> 0, \ldots, \eta_1+\xi_2+\ldots +\xi_n  >0] \\
	&\quad + \pr[\eta_1<0, \eta_1+\xi'_2< 0, \ldots, \eta_1+\xi'_2+\ldots +\xi'_n  <0]  
	\\
	&=\sum_{x> 0}\left( \pr[\tau^S(x)>n-1]\mathbb P[\eta_1=x] + \pr[\tau^{S''}(x)>n-1]\mathbb P[\eta_1=-x] \right).
	\end{align*}
	Using a similar argument as the one in Lemma \ref{LemB}, we get \eqref{tauY>n}. The relation  \eqref{tauY=n} can be obtained by first writing 
	\begin{align*} 
	\pr[\tau^X_1=n+2] 	&= \sum_{x> 0}   \pr[\eta_1=x]  \sum_{y\leq -1}     \pr[\xi_{1}=y] 
	\sum_{z=1}^{\vert y\vert}  \pr[ \tau^S(x)>n, x+S(n)=z] \\
	& +  \sum_{x> 0}   \pr[\eta_1=-x]  \sum_{y\leq -1}     \pr[-\xi'_{1}=y] 
	\sum_{z=1}^{\vert y\vert}  \pr[ \tau^{S''}(x)>n, x+S''_n=z], 
	\end{align*}
and then repeating the argument in the proof of Lemma \ref{LemB}. Finally, using again Theorem B in \cite{Doney97}, we get \eqref{LemB'2}. 
\end{proof}

In the following we show that the one and two marginal distributions of $Y_n$ converge to the ones of a skew Brownian motion with parameter $\alpha$ defined in (\ref{def:alpha}). The convergence of higher marginal distributions can be proved by   induction   with  similar arguments. 
\textcolor{black}{Morover, we suppose that $\sigma = \sigma' = 1$ in order to ease our notations. The proof for general $\sigma$ and $\sigma'$ follows the same line.}

\subsection{One-dimensional distribution}

Note that   
\begin{align*} 
\alpha = \lim_{n\to \infty} \frac{\pr[\tau^X_1 >n, \eta_1>0]}{\pr[\tau^X_1 > n]}. 
\end{align*} 
For a function $\varphi$, we denote $\hat{\varphi}(u) = \alpha \varphi(u) + (1-\alpha)\varphi(-u)$. 
\begin{lemma}\label{1-dim2}
	For any $t\in [0, 1]$, it holds
	$$\lim_{n\to +\infty}\bE \left[ \varphi \left( X_n(t)\right)\right] 
		= \int_0^{+\infty} \hat{\varphi}(u) \frac{2e^{-u^2/2t}}{\sqrt{2\pi t}}du = \int_{-\infty}^{+\infty} \varphi(u)p^\alpha_t(0,u)du.$$
\end{lemma}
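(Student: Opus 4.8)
The plan is to mimic the proof of Lemma \ref{1-dim}, decomposing the trajectory of $X$ according to the epoch $k$ of its last visit to $0$ before time $[nt]$, except that now the first post-visit increment is governed by $\mathcal{L}(\eta_1)$, which can be positive or negative, so the excursion that follows is either an excursion of $S$ staying positive (on the event $\{\eta_1>0\}$) or an excursion of $S''=-S'$ staying positive (on the event $\{\eta_1<0\}$); here $S''=-S'$ has increments $-\xi'_k$ and the same ladder structure with the roles of ascending and descending exchanged, so Lemmas \ref{LemA}, \ref{LemA'}, \ref{LemA''}, \ref{LemC} apply verbatim to $S''$ with $h'$ in place of $h$. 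First I would write, for fixed $t\in(0,1)$,
\begin{align*}
\bE\left[\varphi\left(\frac{X([nt])}{\sqrt n}\right)\right]
&\approx \sum_{k=0}^{[nt]-1}\Sigma^X_k\,\bE\Big[\bE\Big(\varphi\Big(\frac{x+S([nt]-k-1)}{\sqrt n}\Big)\,\Big|\,\tau^S(x)>[nt]-k-1\Big)\\
&\qquad\qquad\times \pr[\tau^S(x)>[nt]-k-1]\Big|_{\eta_1=x>0}\Big]\\
&\quad+\sum_{k=0}^{[nt]-1}\Sigma^X_k\,\bE\Big[\bE\Big(\varphi\Big(-\frac{x+S''([nt]-k-1)}{\sqrt n}\Big)\,\Big|\,\tau^{S''}(x)>[nt]-k-1\Big)\\
&\qquad\qquad\times \pr[\tau^{S''}(x)>[nt]-k-1]\Big|_{\eta_1=-x<0}\Big],
\end{align*}
using that $[\tau^X_l=k]$ is $\mathcal F_k$-measurable hence independent of the future increments, that $\Sigma^X_k=\sum_l\pr[\tau^X_l=k]$, and the analogue of \eqref{lagtime} to pass between $X([nt])/\sqrt n$ and $X_n(t)$ (costing $\bE[|\xi_{[nt]+1}|+|\xi'_{[nt]+1}|+|\eta_{[nt]+1}|]/\sqrt n\to 0$).

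Next, as in Lemma \ref{1-dim}, I would discard the boundary terms $k\le 1$ and $k\ge [nt]-3$, each being $\mathcal O(n^{-1/2})$ by Lemmas \ref{LemA}, \ref{LemB2}, rewrite the remaining sum as $\int_0^t f_n(s)\,ds$ for a step function $f_n$, and identify the pointwise limit. The key inputs are: $n\Sigma^X_k\to \frac{1}{\pi\sqrt s}\cdot\frac{1}{c_1\bE[h(\eta_1)\1_{\{\eta_1>0\}}]+c'_1\bE[h'(-\eta_1)\1_{\{\eta_1<0\}}]}$ from \eqref{LemB'2}; $\pr[\tau^S(x)>[nt]-[ns]-1]\sim c_1 h(x)/\sqrt{n(t-s)}$ and $\pr[\tau^{S''}(x)>[nt]-[ns]-1]\sim c'_1 h'(x)/\sqrt{n(t-s)}$ from Lemma \ref{LemA}; and the conditional limit $\bE[\varphi((x+S(\cdot))/\sqrt n)\mid \tau^S(x)>\cdot]\to\int_0^\infty\varphi(z\sqrt{t-s})ze^{-z^2/2}dz$ from Lemma \ref{LemC} (and its $S''$-analogue for $\varphi(-\,\cdot)$). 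Collecting the two pieces, the sum $c_1\bE[h(\eta_1)\1_{\{\eta_1>0\}}]$ multiplies $\int_0^\infty\varphi(z\sqrt{t-s})ze^{-z^2/2}dz$ and $c'_1\bE[h'(-\eta_1)\1_{\{\eta_1<0\}}]$ multiplies $\int_0^\infty\varphi(-z\sqrt{t-s})ze^{-z^2/2}dz$; dividing by $c_1\bE[h(\eta_1)\1_{\{\eta_1>0\}}]+c'_1\bE[h'(-\eta_1)\1_{\{\eta_1<0\}}]$ produces exactly $\hat\varphi$, and
\[
\lim_{n\to\infty}f_n(s)=\frac{1}{\pi\sqrt{s(t-s)}}\int_0^{+\infty}\hat\varphi(z\sqrt{t-s})\,ze^{-z^2/2}\,dz.
\]
For domination I would use the uniform bounds \eqref{LemA2} and \eqref{LemB'2}: on $2\le[ns]\le[nt]-4$, $|f_n(s)|\preceq n/\sqrt{[ns]([nt]-[ns])}\le 2/\sqrt{s(t-s)}$, which is integrable on $(0,t)$, so Lebesgue's dominated convergence theorem applies.

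Finally I would evaluate the integral. Substituting $s=ta$ and then $x=z\sqrt{t(1-a)}$, and using Lemma \ref{IMcK} with the Beta-type integral $\int_0^1 da/\sqrt{a(1-a)^3}\,e^{-x^2/(2t(1-a))}$ exactly as in the last display chain of the proof of Lemma \ref{1-dim}, I obtain $\int_0^\infty\hat\varphi(u)\frac{2e^{-u^2/2t}}{\sqrt{2\pi t}}\,du$; writing $\hat\varphi(u)=\alpha\varphi(u)+(1-\alpha)\varphi(-u)$ and comparing with \eqref{skewdensity} at $x=0$ identifies this with $\int_{-\infty}^{+\infty}\varphi(u)p_t^\alpha(0,u)\,du$. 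The main obstacle is bookkeeping rather than conceptual: one must track the two excursion types in parallel and check that the constants $c_1,c'_1$ and the renewal functions $h,h'$ combine into the denominator of $\alpha$, which relies on the fact (noted before \eqref{def:alpha}, and re-expressed just above as $\alpha=\lim_n\pr[\tau^X_1>n,\eta_1>0]/\pr[\tau^X_1>n]$) that $\Sigma^X_k$ is insensitive to the sign of the excursion while $\pr[\tau^S(x)>\cdot]$ versus $\pr[\tau^{S''}(x)>\cdot]$ carries the asymmetry; verifying that the $S''$-versions of Lemmas \ref{LemA}--\ref{LemC} indeed hold with $h'$ is the one place where care is needed, but it is immediate since $S''=-S'$ is itself an oscillating, aperiodic, centered, square-integrable walk.
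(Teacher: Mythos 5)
Your proposal is correct and follows essentially the same route as the paper: the paper likewise splits the expectation into the two pieces according to the sign of the excursion straddling time $[nt]$ (its $A_{0+}(n)$ and $A_{0-}(n)$), applies Lemmas \ref{LemA}, \ref{LemC} and \ref{LemB2} with dominated convergence exactly as in Lemma \ref{1-dim}, and then concludes via the same integral computation and \eqref{skewdensity}, with the weights $\alpha$, $1-\alpha$ emerging from the ratio of $c_1\bE[h(\eta_1)\1_{\{\eta_1>0\}}]$ and $c'_1\bE[h'(-\eta_1)\1_{\{\eta_1<0\}}]$ to their sum, just as you describe. (Only cosmetic caveat: your displayed claim ``$n\Sigma^X_k\to\cdots$'' should read $\sqrt{n}\,\Sigma^X_{[ns]}\to\frac{1}{\pi\sqrt{s}}\bigl(c_1\bE[h(\eta_1)\1_{\{\eta_1>0\}}]+c'_1\bE[h'(-\eta_1)\1_{\{\eta_1<0\}}]\bigr)^{-1}$, the remaining $\sqrt{n}$ being absorbed by the tail probability, as your stated limit of $f_n(s)$ already reflects.)
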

\begin{proof}
	We fix $ x\geq 0$ and $t\in (0, 1)$. Follows the argument in the proof of Lemma \ref{1-dim}, we write $\bE \left[ \varphi \left( \frac{X({[nt]})}{\sqrt{n}}\right)\right] = A_{0+}(n)+ A_{0-}(n)$, where 
	\begin{align*}
	A_{0+}(n)&= \sum_{k=0}^{[nt]} \sum_{l=0}^{+\infty}  \bE \left[ \varphi \left( \frac{X({[nt]})}{\sqrt{n}}\right); \tau^X_l = k, X(k+1) > 0, \ldots, X({[nt]}) > 0 \right],\\
	A_{0-}(n)& = \sum_{k=0}^{[nt]} \sum_{l=0}^{+\infty}  \bE \left[ \varphi \left( \frac{X({[nt]})}{\sqrt{n}}\right); \tau^X_l = k, X(k+1) < 0, \ldots, X({[nt]}) < 0 \right].
	\end{align*}
	We have 
	\begin{align*}
	A_{0-}(n)&\approx \sum_{k=0}^{[nt]-1}  \Sigma_k \bE \left[ \bE\left( \varphi \left (- \frac{x + S''_{{[nt]}-k-1}}{\sqrt{n}}\right)\Big| \tau^{S''}(x) > {[nt]}-k-1\right) \right. \\
	&\qquad \qquad \qquad  \qquad \qquad \qquad \times \left. \pr[\tau^{S''}(x) > {[nt]}-k-1] \Big|_{  \eta_1 =-x<0}\right].
	\end{align*}
	Using Lemmas \ref{LemA}, \ref{LemC} and  \ref{LemB2} and the dominated convergence theorem as in the proof of Lemma \ref{1-dim}, we get 	
	\begin{align*}
	\lim_{n\to +\infty} A_{0-}(n) 
	&= (1-\alpha)  \ \int_0^t \frac{1}{\pi \sqrt{s(t-s)}} \left(\int_0^{+\infty} \varphi(-z\sqrt{t-s})ze^{-z^2/2}dz \right)ds\\ 
	&= \alpha  \int_0^{+\infty} \varphi(-u) \frac{2e^{-u^2/2t}}{\sqrt{2\pi t}}du. 
	\end{align*}
	A similar computation yields
	\begin{align*}
	\lim_{n\to +\infty} A_{0+}(n) 
	&= \alpha \int_0^{+\infty} \varphi(u) \frac{2e^{-u^2/2t}}{\sqrt{2\pi t}}du. 
	\end{align*} 
	Therefore, 
		$$\lim_{n\to +\infty}\bE \left[ \varphi \left( \frac{X({[nt]})}{\sqrt{n}}\right)\right] 
		= \int_0^{+\infty} \hat{\varphi}(u) \frac{2e^{-u^2/2t}}{\sqrt{2\pi t}}du.$$
	Using \eqref{skewdensity} and an argument similar to \eqref{lagtime}, we conclude  Lemma \ref{1-dim2}. 
\end{proof}


\subsection{Finite dimensional distribution}
The convergence of finite-dimensional distributions of $(X_n(t))_{n \geq 1}$ is more delicate. We detail the argument for two-dimensional ones.

Let us   fix $0<s<t, n \geq 1$ and  denote 
$$\kappa = \kappa(n, s) = \min\{k>[ns]: X(k) = 0\}.$$
We write 
\begin{align*}
&\bE\left[ \varphi_1\left( \frac{X({[ns]})}{\sqrt{n}}\right) \varphi_2 \left( \frac{X([nt])}{\sqrt{n}}\right)\right] = A_1(n) + A_2(n),
\end{align*}
where 
\begin{align*}
A_1(n) &= \sum_{k = [ns]+1}^{[nt]} \bE\left[ \varphi_1\left( \frac{X({[ns]})}{\sqrt{n}}\right) \varphi_2 \left( \frac{X([nt])}{\sqrt{n}}\right)\1_{\{\kappa = k\}} \right],\\
A_2 (n) &= \bE\left[ \varphi_1\left( \frac{X({[ns]})}{\sqrt{n}}\right) \varphi_2 \left( \frac{X([nt])}{\sqrt{n}}\right)\1_{\{\kappa > [nt]\}} \right].
\end{align*}

\subsubsection{Estimate of $A_1(n)$}

As in the previous section, based on the sign of $X([ns])$,  $A_1(n)$ can be decomposed as $A_{1+}(n) + A_{1-}(n)$ where    
\begin{align*}
A_{1+}(n) 
&= \sum_{k_1=0}^{[ns]}  \Sigma_{k_1} \sum_{k_2 = [ns]+1}^{[nt]}   \bE \left[\varphi_2 \left( \frac{X([nt]-k_2)}{\sqrt{n}}\right)\right]\\
&\times \bE\left[ \bE\left[ \varphi_1\left( \frac{x+ S([ns]-k_1-1)}{\sqrt{n}}\right);   \tau^S(x) = k_2-k_1 -1  \right]\Big|_{\eta_1 = x>0}\right],\\
A_{1-}(n) 
& =  \sum_{k_1=0}^{[ns]}  \Sigma_{k_1} \sum_{k_2 = [ns]+1}^{[nt]} \bE \left[\varphi_2 \left( \frac{X([nt]-k_2)}{\sqrt{n}}\right)\right] \\
&\times \bE\left[ \bE\left[ \varphi_1\left(- \frac{-x+ S''_{[ns]-k_1-1}}{\sqrt{n}}\right);   \tau^{S''}(-x) = k_2-k_1 -1  \right]\Big|_{\eta_1 = x<0}\right].
\end{align*}
Using Lemmas \ref{LemB2}, \ref{1-dim2} and repeating the argument in Section \ref{Sec:3.2.1}  we get 
\begin{align*}
\lim_{n\to \infty}A_{1+}(n) &= \frac{2\alpha}{\pi \sqrt{ s (t-s)}}  \int_0^{+\infty} \int_0^{+\infty} \varphi_1(v) \hat{\varphi}_2(u) e^{-v^2/2s}e^{ - \frac{(u+v)^2}{2(t-s)}} dudv,\\
\lim_{n\to \infty}A_{1-}(n)  &= \frac{2(1-\alpha)}{\pi \sqrt{ s (t-s)}}  \int_0^{+\infty} \int_0^{+\infty} \varphi_1(-v) \hat{\varphi}_2(u) e^{-v^2/2s} e^{ - \frac{(u+v)^2}{2(t-s)}} dudv.
\end{align*}
Therefore, 
\begin{align} \label{A1alpha} 
\lim_{n\to \infty} A_{1}(n) = 4  \int_0^{+\infty} \int_0^{+\infty} \hat{\varphi}_1(v) \hat{\varphi}_2(u) p_s(v) p_{t-s}(u+v)  dudv.
\end{align}

\subsubsection{Estimate of $A_2 (n)$}
We decompose $A_2 (n) = A_{2+}(n) + A_{2-}(n)$, where  
\begin{align*}
A_{2+} (n)=&  \sum_{k \leq  [ns]} \sum_{l\geq 0}     \bE\Big[ \varphi_1\left( \frac{X({ [ns]})}{\sqrt{n}}\right) \varphi_2 \left( \frac{X([nt])}{\sqrt{n}}\right);   \\
& \qquad \qquad  \qquad \qquad \tau^X_{l} = k, \eta_{k+1} > 0, U_{k, k+2} >0, \ldots, U_{k, [nt]} > 0\Big],  \\
A_{2-}(n) = &   \sum_{k \leq  [ns]} \sum_{l\geq 0}   \bE\Big[ \varphi_1\left( \frac{X({ [ns]})}{\sqrt{n}}\right) \varphi_2 \left( \frac{X([nt])}{\sqrt{n}}\right);  \\
& \qquad \qquad  \qquad \qquad \tau^X_{l} = k, \eta_{k+1} < 0, U'_{k, k+2} <0, \ldots, U'_{k, [nt]} < 0\Big].
\end{align*}
We have 
\begin{align*}
A_{2-}(n) &= \bE\Big[ \varphi_1\left( \frac{U'_{k,  [ns]}}{\sqrt{n}}\right) \varphi_2 \left( \frac{U'_{k,[nt]}}{\sqrt{n}}\right);   \tau^X_{l} = k, U'_{k, k+2} <0, \ldots, U'_{k, [nt]} < 0\Big]  \\
&=    \sum_{k \leq  [ns]} \Sigma_k \bE\Bigg[ \bE\Big[ \varphi_1\left(- \frac{-x + S''_{  [ns]-k-1}}{\sqrt{n}}\right) \varphi_2 \left(- \frac{-x + S''_{[nt]-k-1}}{\sqrt{n}}\right);   \\
& \qquad \qquad \qquad \qquad\qquad \qquad \quad \qquad \qquad  \tau^{S''}(x) > [nt]-k-1\Big]\Bigg|_{ \eta_1=x<0}\Bigg].
\end{align*}
By adapting the argument in Section \ref{Sec:3.2.2} we get 
\begin{align*}
\lim_{n\to +\infty} A_{2-} (n) =  {1- \alpha \over\pi \sqrt{  s (t-s)}}  \int_0^{+\infty} &dy' \int_0^{+\infty} dz' \quad\varphi_1(-y')\varphi_2(-z')  \\
& \times e^{-y'^2/2s}  
\left(e^{-\frac{(z'-y')^2}{2(t-s)}} - e^{-\frac{(z'+y')^2}{2(t-s)}} \right).
\end{align*}
Similar argument yields, 
\begin{align*} 
\lim_{n\to +\infty} A_{2+} (n) 
=   {\alpha \over\pi \sqrt{  s (t-s)}}  \int_0^{+\infty} & dy' \int_0^{+\infty} dz' \varphi_1(y')\varphi_2(z')   \\
& \times  e^{-y'^2/2s} 
\left(e^{-\frac{(z'-y')^2}{2(t-s)}} - e^{-\frac{(z'+y')^2}{2(t-s)}} \right).
\end{align*} 
Therefore, 
\begin{align} \label{A2alpha}
\lim_{n\to +\infty} A_{2} (n) 
=  2  \int_0^{+\infty}  \int_0^{+\infty}   & (\alpha \varphi_1(y') \varphi_2(z') +  (1-\alpha) \varphi_1(-y')\varphi_2(-z')) p_s(y')  \notag \\
& \times (p_{t-s}(y'-z') - p_{t-s}(y'+z') )dy'dz'.
\end{align}

Finally, it follows from \eqref{A1alpha},  \eqref{A2alpha} and \eqref{skewdensity} that for all $0 \leq s < t \leq 1$, it holds that 
\begin{align*}
\lim_{n \to \infty} \bE\left[ \varphi_1\left( X_n(s)\right) \varphi_2 \left( X_n(t)\right)\right] &=
\lim_{n \to \infty} \bE\left[ \varphi_1\left( \frac{X({[ns]})}{\sqrt{n}}\right) \varphi_2 \left( \frac{X([nt])}{\sqrt{n}}\right)\right] \\
&= \bE \left[ \varphi_1(B^\alpha_s) \varphi_2(B^\alpha_t)\right],
\end{align*}
where $B^\alpha$ is a skew Brownian motion with parameter $\alpha$. 

\subsection{Tightness}
Note that the modulus of continuity $w_{X_n}$ of $X_n$ satisfies
\begin{align*}
w_{X_n}(\delta)  \leq \frac{3}{\sqrt{n}}\sup_{1\leq  i <  j \leq n; |i-j| \leq n\delta}|S(i) - S(j)| & +\frac{3}{\sqrt{n}}\sup_{1\leq  i <  j \leq n; |i-j| \leq n\delta}|S'(i) - S'(j)| \\
&+ \frac{2}{\sqrt{n}}\sup_{1\leq l \leq n}|\eta_l| \1_{\{\tau^X_l \leq n\}}.
\end{align*}
Therefore,  the tightness of $X$ can be proved in a similar way as the one of $Y$. 
We achieve the proof of Theorem \ref{skew} using Theorem 7.1 in \cite{Billingsley}.

\section*{Acknowledgment}
H.-L. Ngo thanks the University of Tours for generous hospitality in the Instittue Denis Poisson (IDP) and financial support in May  2019. 
M. Peign\'e  thanks  the  Vietnam Institute for Advanced Studies in Mathematics (VIASM) and the VietNam Academy of Sciences And Technology  (VAST)  in Ha Noi for  their kind and friendly hospitality and accommodation in June 2018.

\end{document}